\newtheorem{theorem}{Theorem}[section]
\newtheorem{lemma}[theorem]{Lemma}
\newtheorem{proposition}[theorem]{Proposition}
\begin{document}

\title{\large\textbf{THE RANDOM CLUSTER MODEL ON THE COMPLETE GRAPH VIA LARGE DEVIATIONS}} % The title of the document
\date{} % Removes the date from the title
\author{Darion Mayes}
\maketitle

\begin{abstract}
We study the emergence of the giant component in the random cluster model on the complete graph, which was first studied by Bollobás, Grimmett, and Janson in \cite{BGJ1996}. We give an alternative analysis using a thermodynamic/large deviations approach introduced by Biskup, Chayes, and Smith in \cite{BCS2007} for the case of percolation. In particular, we compute the rate function for large deviations of the size of the largest connected component of the random graph for $q\geq 1$.
\end{abstract}

\section{Introduction}

The random cluster model was introduced by Fortuin and Kasteleyn in \cite{FK1972} as a generalisation of several existing models in statistical physics. For a finite, simple graph $G=(V,E)$, the random cluster model with \textit{edge weight} $p\in[0,1]$ and \textit{cluster weight} $q>0$ is the probability measure $\phi_{G,p,q}$ on the state space $\Omega_G=\{0,1\}^E$ defined by
\begin{equation}\label{Equation Definition of random cluster model}
\phi_{G,p,q}[\omega]:=\dfrac{\bigl\{\prod_{e \in E}p^{\omega_e}(1-p)^{1-\omega_e}\bigr\}q^{k(\omega)}}{Z_{G,p,q}},\ \omega=(\omega_e)_{e\in E} \in\Omega_G,
\end{equation}
where
\begin{equation}\label{Equation Definition of Z}
Z_{G,p,q}:=\sum_{\omega\in\Omega_G}\bigl\{\prod_{e \in E}p^{\omega_e}(1-p)^{1-\omega_e}\bigr\}q^{k(\omega)}
\end{equation}
is the normalising partition function, and $k(\omega)$ is the number of connected components in the graph $G(\omega)=(V, E(\omega))$ with $E(\omega)=\{e\in E:\ \omega_e=1\}$. For an element $\omega\in\Omega_G$, we say that the edge $e$ is \textit{open} if $e\in E(\omega)$, otherwise, it is \textit{closed}. The measure $\phi_{G,p,q}$ may be equivalently interpreted as a probability distribution on the set of random edge subgraphs $G(\omega)=(V, E(\omega))$ of $G$. For $q=1$, we recover the percolation model, for which we use the standard shorthand $\phi_{G,p}$.

We will be interested in the particular case of the complete graph $K_n=(V_n, E_n)$ when we choose $\lambda\in[0,\infty)$ and rescale the edge weight as $p=\lambda/n$. For the corresponding random cluster measure, we use the shorthand $\phi_{n,\lambda,q}:= \phi_{K_n,\lambda/n,q}$. Similarly, we denote the partition function as $Z_{n,\lambda,q}$, and use the shorthand $\phi_{n,\lambda}$ in the case of percolation. For fixed $q\geq 1$, the random cluster model  is stochastically ordered in $\lambda$, and we study how the random graph obtained under the measure $\phi_{n,\lambda,q}$ varies with respect to this parameter. 

One particular quantity of interest is the size of the largest connected component in the random graph. In their groundbreaking paper \cite{ER1960}, Erdős and Rényi studied the measure $\phi_{n,\lambda}$ and showed the existence of a critical parameter $\lambda_c=1$ marking the emergence of a \textit{giant} component containing a positive proportion of the total number of vertices asymptotically almost surely. More specifically, they showed that, with probability tending to one as $n\to\infty$ under the percolation measure $\phi_{n,\lambda}$, the largest component of the graph $K_n(\omega)=(V_n,E_n(\omega))$ is of order $\log n$ for $\lambda <1$ and of order $n$ for $\lambda >1$. This abrupt change in the size of the largest component (in the limit as $n\to\infty$) is known as an \textit{asymptotic phase transition}.

The main idea in the proof of the asymptotic phase transition of \cite{ER1960} is an exploration process whereby one chooses a vertex and sequentially inspects which vertices are connected to it in the random graph. Provided one has not yet explored a large fraction of the vertices, this exploration can be approximated by a Poisson branching process with mean $\lambda$ in the limit as $n\to\infty$. The emergence of a giant component then corresponds to the \textit{survival} of the corresponding Poisson branching process. This method is not crucially  dependent on the structure of the complete graph, and has been successfully applied to establish similar asymptotic phase transitions on a variety of families of graphs, such as the hypercube in \cite{AKS1982}, and expander graphs in \cite{ABS2004}.

For $q\neq 1$, the presence of the factor $q^{k(\omega)}$ in the random cluster measure $\phi_{n,\lambda,q}$ creates an additional complexity---the states of the edges in the random graph are no longer independent and it is no longer clear that the exploration process may be used. Nevertheless,  it has been established by Bollobás, Grimmett, and Janson in \cite{BGJ1996}  that the largest component of the complete graph  undergoes an asymptotic phase transition at a particular value $\lambda_c(q)$. The claim is proven by randomly colouring the vertices using two colours (say red and green) in a particular way so that the distribution of edges on the \textit{red vertices} is given by a percolation measure, to which we may apply the exploration process used in \cite{ER1960}. Note that the colouring argument depends crucially on the fact that for a fixed set $R$ of red vertices, the conditional distribution of edges yields a random percolation on the complete graph induced on the set $R$. This is a particular fact that does not generalise to more structured families of graphs.

The purpose of the present paper is to provide a new analysis of the random cluster model $\phi_{n,\lambda,q}$  on the complete graph, with the hope that it may be extended to wider families of graphs. Our analysis employs the methods introduced by Biskup, Chayes and Smith in \cite{BCS2007}, who analysed the largest component for percolation on the complete graph using a large deviations approach instead of branching processes. We extend this approach to the random cluster model on the complete graph for $q \geq 1$. In particular, we compute the rate function for large deviations of the size of the largest connected component, thereby recovering the asymptotic phase transition proven in \cite{BGJ1996}. In addition, we obtain a limit for the \textit{free energy} of the random cluster model. This was also computed in \cite{BGJ1996} via the colouring argument, but used to study the large deviations of the \textit{number} of connected components, rather than their size. As a byproduct of our analysis, we also obtain the exponential decay rate for the events that the random graph is connected and acyclic, respectively.

\section{Statement of Results}

In this section, we state the main results of the paper. Fix $q>0$ and $\lambda>0$. We use $\phi_{n,\lambda,q}$ to denote the random cluster probability measure on the complete graph $K_n$ with edge weight $p=\lambda/n$ and cluster weight $q$. Similarly, we denote the corresponding partition function by $Z_{n,\lambda,q}$. In some of our results, we will analyse the measure of an event $A\subset\Omega_{K_n}$ before normalisation, in which case we write $Z_{n,\lambda,q}[A]:=Z_{n,\lambda,q}\phi_{n,\lambda,q}[A]$. When $q=1$, we drop the subscript $q$ entirely and use the standard percolation notation $\phi_{n,\lambda}$.

Let $\mathcal{N}_r$ be the number of connected components in $K_n(\omega)$ of size larger than $r$, and let $\mathcal{V}_r$ be the set of vertices in $K_n(\omega)$ belonging to components of size larger than $r$. In addition, recall the \textit{entropy function}, defined for $\theta\in(0,1)$ by
\begin{equation}\label{Equation Entropy function}
S(\theta)=\theta\log\theta+(1-\theta)\log(1-\theta).
\end{equation}
Following \cite{BCS2007}, we define two functions on $[0,\infty)$ by
\begin{equation}\label{Equation Two functions}
\pi_1(x)=1-e^{-x},\ \Psi(x)=\bigg(\log x-\frac{1}{2}\bigg[x-\frac{1}{x}\bigg]\bigg)\wedge 0.
\end{equation}
Note that $\Psi(x)<0$ if and only if $x>1$. Finally, we define the function
\begin{equation}\label{Equation Phi function}
\begin{aligned}
\Phi(\theta,\lambda,q)=-S(\theta)&+(1-\theta)\log[1-\pi_1(\lambda\theta)]+\theta\log\pi_1(\lambda\theta)\\
&+(1-\theta)\{\Psi(\tfrac{\lambda(1-\theta)}{q})-(\tfrac{q-1}{2q})\lambda(1-\theta)+\log q\}.
\end{aligned}
\end{equation}
Our main result may then be stated as follows:

\begin{theorem}\label{Theorem Rate function for size of largest component}
Fix $q>0$ and $\lambda>0$. Then, for every $\theta\in[0,1]$,
\begin{equation}\label{Equation Rate function for size of largest component}
\lim_{\epsilon\downarrow 0}\lim_{n\to\infty}\frac{1}{n}\log\phi_{n,\lambda,q}\big[\lvert\mathcal{V}_{\epsilon n}\rvert=\lfloor \theta n \rfloor\big]=\Phi(\theta,\lambda,q)-\sup_{\theta\in[0,1]}\Phi(\theta,\lambda,q).
\end{equation}
\end{theorem}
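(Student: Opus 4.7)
The plan is to follow the strategy of \cite{BCS2007}: decompose the event $\{|\mathcal V_{\epsilon n}|=\lfloor\theta n\rfloor\}$ according to which set $R\subset V_n$ of vertices realises $\mathcal V_{\epsilon n}(\omega)$, exploit the product structure of the random cluster weight once no edge crosses between $R$ and $R^c$, and estimate the resulting ``inside'' and ``outside'' restricted partition functions separately. The crucial algebraic point is that if $R=\mathcal V_{\epsilon n}(\omega)$ then no open edge can connect $R$ and $R^c$ (otherwise $R^c$ would meet $\mathcal V_{\epsilon n}$), and in the absence of crossing edges $k(\omega)=k(\omega|_R)+k(\omega|_{R^c})$, so the $q^{k(\omega)}$-weight factorises. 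Combined with the symmetry of $K_n$ over the $\binom{n}{\lfloor\theta n\rfloor}$ possible choices of $R$, this yields the exact identity
\[
Z_{n,\lambda,q}\bigl[|\mathcal V_{\epsilon n}|=\lfloor\theta n\rfloor\bigr]
=\binom{n}{\lfloor\theta n\rfloor}\Bigl(1-\tfrac{\lambda}{n}\Bigr)^{\lfloor\theta n\rfloor(n-\lfloor\theta n\rfloor)}\,\mathcal L^\epsilon_{\lfloor\theta n\rfloor}\,\mathcal S^\epsilon_{n-\lfloor\theta n\rfloor},
\]
where $\mathcal L^\epsilon_m:=Z_{K_m,\lambda/n,q}[\text{every component of }K_m(\omega)\text{ has size}>\epsilon n]$ and $\mathcal S^\epsilon_l$ is defined analogously with ``$\leq\epsilon n$'', in the $Z[A]$ notation introduced above.

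Each of the four factors then contributes one term of $\Phi$. Stirling gives $n^{-1}\log\binom{n}{\lfloor\theta n\rfloor}\to -S(\theta)$, and since $\log[1-\pi_1(\lambda\theta)]=-\lambda\theta$,
\[
\frac{1}{n}\log\Bigl(1-\tfrac{\lambda}{n}\Bigr)^{\lfloor\theta n\rfloor(n-\lfloor\theta n\rfloor)}\longrightarrow -\lambda\theta(1-\theta)=(1-\theta)\log[1-\pi_1(\lambda\theta)].
\]
The remaining two factors require separate asymptotic lemmas: a large-component estimate
\[
\lim_{\epsilon\downarrow 0}\lim_{n\to\infty}\frac{1}{n}\log\mathcal L^\epsilon_{\lfloor\theta n\rfloor}=\theta\log\pi_1(\lambda\theta),
\]
saying that the restricted random cluster model on $R$ (with effective mean degree $\lambda\theta$) is essentially concentrated on a single connected component, and a subcritical small-component estimate
\[
\lim_{\epsilon\downarrow 0}\lim_{n\to\infty}\frac{1}{n}\log\mathcal S^\epsilon_{n-\lfloor\theta n\rfloor}=(1-\theta)\Bigl\{\Psi\!\Bigl(\tfrac{\lambda(1-\theta)}{q}\Bigr)-\tfrac{q-1}{2q}\lambda(1-\theta)+\log q\Bigr\}.
\]
Multiplying the four contributions gives $n^{-1}\log Z_{n,\lambda,q}[|\mathcal V_{\epsilon n}|=\lfloor\theta n\rfloor]\to\Phi(\theta,\lambda,q)$. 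Since the events $\{|\mathcal V_{\epsilon n}|=k\}$, $k=0,\dots,n$, partition $\Omega_{K_n}$, a standard sum-versus-max / Laplace argument (using continuity of $\theta\mapsto\Phi$) upgrades this to the free-energy limit $n^{-1}\log Z_{n,\lambda,q}\to\sup_{\theta\in[0,1]}\Phi(\theta,\lambda,q)$, and taking the quotient yields \eqref{Equation Rate function for size of largest component}.

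The main obstacle is the small-component estimate. The large-component lemma is reasonably direct: as $\epsilon\downarrow 0$ the constraint forces $K_{\lfloor\theta n\rfloor}(\omega)$ to be essentially a single connected component, and a spanning-tree enumeration via Cayley's formula (carrying only one factor of $q$ for the single cluster) delivers $\pi_1(\lambda\theta)^{\theta n}$ up to subexponential corrections. The small-component lemma, by contrast, must be valid both when $\lambda(1-\theta)/q\leq 1$ --- where $\Psi=0$ and no exponential cost arises because the system is already naturally subcritical --- and when $\lambda(1-\theta)/q>1$, where forbidding large components is a genuine large-deviation event and $\Psi<0$ quantifies the tilt cost. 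Here the $q^{k(\omega)}$ factor destroys the edge-independence exploited directly in \cite{BCS2007}, and one must argue that the effective edge parameter is renormalised to $\lambda/q$. My plan would be to enumerate configurations by their spanning forest plus surplus edges, apply Cayley's formula weighted by $q$ per component, and carry out a Laplace analysis over the empirical component-size distribution; the corrective terms $-\tfrac{q-1}{2q}\lambda(1-\theta)+\log q$ should then emerge from the $q$-modified tree-counting generating function.
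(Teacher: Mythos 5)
Your decomposition is essentially the paper's. The paper proves the theorem via Theorems \ref{Theorem Weighted rate function for size of largest component} and \ref{Theorem Free energy of random cluster model on complete graph}, and the proof of Theorem \ref{Theorem Weighted rate function for size of largest component} rests on exactly the factorisation you write: choose a set $A$ of $\lfloor\theta n\rfloor$ vertices, pay $(1-\tfrac{\lambda}{n})^{|A||A^c|}$ for no crossing edges, multiply the restricted weighted partition function for $A$ (giving $\theta\log\pi_1(\lambda\theta)$ via Theorem \ref{Theorem Weighted rate function for connectedness}) and for $A^c$ (giving the $\Psi$-term via Theorems \ref{Theorem Weighted rate function for acyclic graphs} and \ref{Theorem Acyclic equals small components}), and sum over the $\binom{n}{\lfloor\theta n\rfloor}$ choices. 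One organisational difference: the paper first invokes Lemma \ref{Lemma Uniqueness of large components} to reduce to $\mathcal N_{\epsilon n}=1$, so the ``inside'' event is plain connectedness and Theorem \ref{Theorem Weighted rate function for connectedness} applies directly; your $\mathcal L^\epsilon_m$ instead allows several macroscopic components, so you need the uniqueness estimate \emph{inside} that lemma to show multi-component configurations are exponentially subdominant. That is the same work, just done in a different place.

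The one point where your sketch papers over a genuine difficulty is the free-energy step. Mere continuity of $\theta\mapsto\Phi$ is not enough to pass from $\frac{1}{n}\log Z_{n,\lambda,q}[|\mathcal V_{\epsilon n}|=k]\to\Phi(k/n,\lambda,q)$ pointwise to $\frac{1}{n}\log Z_{n,\lambda,q}\to\sup_\theta\Phi$, because the outside factor $Z_{(1-k/n)n,\lambda(1-k/n),q}[B_{\epsilon n}]$ has effective parameter $\lambda(1-k/n)$, and the convergence of the acyclicity rate in Theorem \ref{Theorem Weighted rate function for acyclic graphs} is only uniform for $\lambda$ on compacts of $(0,\infty)\setminus\{q\}$; it degenerates near $\lambda(1-\theta)=q$ and near $\theta=1$. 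The paper has to prove uniformity in $\lambda$ throughout Section 3 (this is precisely why Lemma \ref{Lemma Maximiser of Theta} and Lemma \ref{Lemma Convergence to continuous supremum} carry uniformity statements), and then, in Section 5, to discard the dangerous range of $k/n$ by comparison with percolation via stochastic domination and the law of large numbers of Theorem \ref{Theorem LLN percolation giant component}. If you implement the Laplace argument as stated you will get stuck at this point; you should anticipate needing a uniform-in-$\lambda$ rate and a truncation away from $\theta\approx 1$ and $\theta\approx 1-q/\lambda$.
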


In the language of large deviations, Theorem \ref{Theorem Rate function for size of largest component} says that in the limit $\epsilon\downarrow 0$, the sequence of random variables $\lvert\mathcal{V}_{\epsilon n}\rvert/n$ satisfies the large deviations principle with rate function $\Phi(\theta,\lambda,q)$. Moreover, if $\theta^*\in[0,1]$ maximises $\Phi(\theta,\lambda,q)$, then Theorem \ref{Theorem Rate function for size of largest component} implies that approximately $\theta^*n$ vertices of $K_n(\omega)$ belong to components of order $n$. The following lemma tells us that in fact, these vertices all belong to a single component of size $\theta^*n$:

\begin{lemma}\label{Lemma Uniqueness of large components}
Fix $q>0$ and $\lambda>0$. Then, for every $\lambda>0$ and $\epsilon>0$ there exists $c=c(\lambda,\epsilon)>0$ such that for every $\theta>\epsilon>0$,
\begin{equation}\label{Equation Uniqueness of large components}
\phi_{n,\lambda,q}[\lvert\mathcal{V}_{\epsilon n}\rvert=\lfloor \theta n \rfloor, \mathcal{N}_{\epsilon n}=1]\geq(1-e^{-cn})\phi_{n,\lambda,q}[\lvert\mathcal{V}_{\epsilon n}\rvert=\lfloor \theta n \rfloor].
\end{equation}
\end{lemma}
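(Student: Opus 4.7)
My approach is to decompose the event $B = \{|\mathcal{V}_{\epsilon n}| = \lfloor \theta n \rfloor\}$ by the partition structure of its large components and to compare configurations with $\mathcal{N}_{\epsilon n} \geq 2$ to those with $\mathcal{N}_{\epsilon n} = 1$ via a merging argument. For any unordered partition $\mathcal{P} = \{C_1, \ldots, C_k\}$ of an $L$-subset $D \subseteq V_n$ (with $L = \lfloor \theta n \rfloor$) into pieces of sizes $\ell_i \geq \epsilon n$, let $Z[\mathcal{P}]$ denote the contribution to $Z_{n,\lambda,q}$ from configurations whose large components are exactly $C_1, \ldots, C_k$. Since all inter-$C_i$ edges and all big-to-small edges must be closed and each $C_i$ must be internally connected, one obtains
\begin{equation*}
Z[\mathcal{P}] = q^k \cdot (1-p)^{L(n-L) + \sum_{i<j}\ell_i\ell_j} \cdot \prod_{i=1}^{k} Z_{\mathrm{conn}}(\ell_i) \cdot Z_{\mathrm{small}}(n-L),
\end{equation*}
where $Z_{\mathrm{conn}}(\ell) = \sum_{\omega \text{ connected on } K_\ell} p^{|E(\omega)|}(1-p)^{\binom{\ell}{2} - |E(\omega)|}$ and $Z_{\mathrm{small}}$ is the analogous partition function on the small part restricted to having no component of size $\geq \epsilon n$. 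Setting $B_k = \{|\mathcal{V}_{\epsilon n}| = L, \mathcal{N}_{\epsilon n} = k\}$, one has $Z_{n,\lambda,q}[B_k] = \sum_{\mathcal{P}} Z[\mathcal{P}]$, and in particular $Z_{n,\lambda,q}[B_1] = \sum_{D:\,|D|=L} Z[\mathcal{P}_D]$ where $\mathcal{P}_D = (D)$.

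The central ingredient is the merging inequality
\begin{equation*}
Z_{\mathrm{conn}}(L) \geq \prod_i Z_{\mathrm{conn}}(\ell_i) \cdot \prod_{i<j}\bigl(1 - (1-p)^{\ell_i \ell_j}\bigr),
\end{equation*}
obtained by restricting to configurations on $K_L$ in which each $C_i$ is internally connected and at least one of the $\ell_i\ell_j$ edges between each pair $(C_i,C_j)$ is open; any such configuration is automatically connected on $K_L$. Since $\ell_i \ell_j \geq \epsilon^2 n^2$ makes each factor on the right at least $1 - e^{-\lambda\epsilon^2 n} = 1 - o(1)$, combining this with the factorization above gives
\begin{equation*}
Z[\mathcal{P}] \leq 2\,q^{k-1} (1-p)^{\sum_{i<j}\ell_i\ell_j} \cdot Z[\mathcal{P}_D]
\end{equation*}
for all sufficiently large $n$. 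Summing over $k \geq 2$, then over partitions $\mathcal{P}$ of $D$ with prescribed sizes $(\ell_1, \ldots, \ell_k)$, then over $D$, and using $\sum_D Z[\mathcal{P}_D] = Z_{n,\lambda,q}[B_1]$, reduces the lemma to showing that
\begin{equation*}
\sum_{k \geq 2} \frac{q^{k-1}}{k!}\sum_{\substack{\ell_1,\ldots,\ell_k \geq \epsilon n \\ \sum_i \ell_i = L}} \binom{L}{\ell_1,\ldots,\ell_k}\,(1-p)^{\sum_{i<j}\ell_i\ell_j} \leq e^{-c(\lambda,\epsilon)\,n}
\end{equation*}
for some $c(\lambda,\epsilon) > 0$.

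The main obstacle is precisely this last combinatorial estimate. The multinomial coefficient contributes an exponential entropy of partitions of order $n$, while the factor $(1-p)^{\sum_{i<j}\ell_i\ell_j} \leq \exp(-\tfrac{\lambda}{2n}(L^2 - \sum_i\ell_i^2))$ provides an exponential penalty; under the crude merging bound of Step 2 these two rates can be comparable (and can even go the wrong way when $\lambda\theta$ is not large), so the naive estimate is insufficient. The resolution is to sharpen Step 2 using the precise large-deviation asymptotics of $Z_{\mathrm{conn}}(\ell)$ as $\ell = \alpha n \to \infty$---exactly the asymptotics developed in the bulk of the paper for the derivation of $\Phi(\theta,\lambda,q)$. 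The key fact is that connectivity of $G(\ell, \lambda/n)$ becomes easier \emph{per vertex} as $\ell$ grows (with fixed edge density $p$), so that $Z_{\mathrm{conn}}(L)$ is exponentially larger than the factorised lower bound would suggest. Carrying out the corresponding rate-function computation for $B_k$ parallel to that for $B_1$, and verifying that the resulting rate is strictly smaller at every $k \geq 2$ and every admissible vector $(\alpha_1,\ldots,\alpha_k)$ with $\alpha_i \geq \epsilon$, yields the required uniform gap $c(\lambda,\epsilon) > 0$ and hence the lemma.
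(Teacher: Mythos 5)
Your decomposition of $\{|\mathcal V_{\epsilon n}|=\lfloor\theta n\rfloor\}$ by the partition structure of the large components is a legitimate alternative to the paper's strategy, and you correctly identify that the crude merging inequality $Z_{\mathrm{conn}}(L)\ge\prod_i Z_{\mathrm{conn}}(\ell_i)\prod_{i<j}(1-(1-p)^{\ell_i\ell_j})$ is quantitatively too weak once the multinomial entropy is accounted for. However, the proposal stops at exactly the point where the proof must actually be done: you assert that \emph{carrying out the rate-function computation for $B_k$ and verifying the resulting rate is strictly smaller at every $k\ge2$ and every admissible $(\alpha_1,\dots,\alpha_k)$ with $\alpha_i\ge\epsilon$} closes the argument, but none of this is carried out, and it is precisely the mathematical content of the lemma. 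You would need to show that the function
\begin{equation*}
\Xi'(\alpha_1,\dots,\alpha_k)= -\sum_i\alpha_i\log\tfrac{\alpha_i}{\theta}-\tfrac{\lambda}{2}\Bigl(\theta^2-\sum_i\alpha_i^2\Bigr)+\sum_i\alpha_i\log\pi_1(\lambda\alpha_i)-\theta\log\pi_1(\lambda\theta)
\end{equation*}
is bounded away from $0$ uniformly over all $k\ge2$, over all partitions with parts $\ge\epsilon$, and over all $\theta>\epsilon$, and that the number of terms in the $k$-sum is subexponential. The superadditivity of $\alpha\mapsto\alpha\log\pi_1(\lambda\alpha)$ is true and points in the right direction, but by itself does not establish the strict, uniform negativity of $\Xi'$ against the positive entropy term.

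The paper avoids this multi-component combinatorics entirely. It fixes two vertices $x,y$, observes that on $\{\mathcal N_{\epsilon n}>1\}$ there is a pair with $|\mathcal C_x|,|\mathcal C_y|\ge\epsilon n$ and $x\not\leftrightarrow y$, pays a polynomial factor $n^2$ by transitivity and a union bound, and then conditions on $\mathcal C_x\cup\mathcal C_y$. After this conditioning, the relevant estimate is exactly the statement of Lemma~\ref{Lemma Uniqueness prerequisite} (the analogue of \cite{BCS2007}, Lemma~6.1): that $\phi_{m,\cdot,q}[K^c\mid K_{\tilde\epsilon,2}]$ decays exponentially in $m$. The rate-function analysis there is a one-dimensional problem (a function $\Xi$ on $[0,1]$, shown convex and symmetric about $1/2$, hence maximised at the endpoints), whereas your version leads to a problem on the whole simplex for each $k$. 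So both approaches ultimately rest on a comparison of $Z_{\mathrm{conn}}$ rates, but the paper's two-vertex conditioning collapses everything to the $k=2$ case in a clean, uniform way. If you wish to pursue your partition decomposition, you should at least reduce general $k\ge2$ to the case $k=2$ (e.g.\ by merging all but one component) and then prove the strict negativity for $k=2$ explicitly; otherwise the argument as written is not a proof.
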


It remains to specify the maximiser $\theta^*$. We will see in Section $5$ that the value $\theta^*$ maximising $\Phi(\theta,\lambda,q)$ is given by
\begin{equation}\label{Equation theta lambda}
\theta(\lambda,q)=\begin{cases}0&\text{ if }\lambda<\lambda_c(q)\\\theta_{\text{max}}&\text{ if }\lambda\geq\lambda_c(q)\end{cases}
\end{equation}
where the \textit{critical} value $\lambda_c$ is defined as
\begin{equation}\label{Equation Critical value of lambda}
\lambda_c(q)=\begin{cases}q&\text{ if }q\leq 2\\2\bigg(\frac{q-1}{q-2}\bigg)\log(q-1)&\text{ if }q>2\end{cases}
\end{equation}
and $\theta_{\text{max}}$ is the largest solution of the \textit{mean field equation}
\begin{equation}\label{Equation Mean field}
e^{-\lambda\theta}=\frac{1-\theta}{1+(q-1)\theta}.
\end{equation}
The solutions to (\ref{Equation Mean field}) are discussed in detail in (\cite{BGJ1996}, Lemma $2.5$). In particular, it is shown there that $\theta_{\max}>0$ for $\lambda>\lambda_c$. Consequently, Theorem \ref{Theorem Rate function for size of largest component} implies that the largest component of the graph $K_n(\omega)$ is of order $n$ asymptotically almost surely when $\lambda > \lambda_c$. In this way, we recover the asymptotic phase transition for the size of the largest connected component established in \cite{BGJ1996}. The behaviour for $\lambda=\lambda_c$ is more complicated, and will not be discussed here.

Theorem \ref{Theorem Rate function for size of largest component} was proven in \cite{BCS2007} for the special case of percolation by conditioning on the set $A$ of vertices contained in the largest component and computing the exponential rates of the events that $A$ is connected and that $A^c$ does not contain any large components. We will extend this approach to the random cluster model. To this end, let $K$ be the event that $K_n(\omega)$ is connected, let $B_r$ be the event that $K_n(\omega)$ contains no components of size larger than $r$ and let $L$ be the event that $K_n(\omega)$ is acyclic. The following three theorems,  which generalise (\cite{BCS2007}, Theorems $2.3$, $2.4$ and $2.5$), yield the exponential rates of these three events, beginning with the event $K$:

\begin{theorem}\label{Theorem Weighted rate function for connectedness}
Fix $q>0$ and $\lambda>0$. Then
\begin{equation}\label{Equation Weighted rate function for connectedness}
\lim_{n\to\infty}\frac{1}{n}\log Z_{n,\lambda,q}\big[K\big]=\log \pi_1(\lambda).
\end{equation}
Moreover, convergence is uniform for $\lambda$ belonging to compact subsets of $[0,\infty)$.
\end{theorem}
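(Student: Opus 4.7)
The plan is to reduce immediately to the percolation case. On the event $K$, every $\omega$ satisfies $k(\omega)=1$, so the cluster weight $q^{k(\omega)}=q$ is constant on $K$ and factors out of the weighted sum:
\begin{equation*}
Z_{n,\lambda,q}[K] \;=\; q \sum_{\omega\in K}\prod_{e \in E_n} p^{\omega_e}(1-p)^{1-\omega_e} \;=\; q\cdot\phi_{n,\lambda}[K],
\end{equation*}
where the last equality uses that the percolation partition function $Z_{n,\lambda,1}$ equals $1$. Dividing by $n$ and letting $n\to\infty$, the correction $n^{-1}\log q$ vanishes uniformly in $\lambda$, so both the limit and the uniformity on compact subsets of $[0,\infty)$ reduce to the percolation statement
\begin{equation*}
\lim_{n\to\infty}\tfrac{1}{n}\log\phi_{n,\lambda}[K] \;=\; \log\pi_1(\lambda),
\end{equation*}
which is Theorem $2.3$ of \cite{BCS2007}.

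Rather than invoke \cite{BCS2007} as a black box, I would reproduce its proof in full, since the same large-deviations machinery will be reused in the generalisations that follow (for the events $B_r$ and $L$). The strategy is to sandwich $\phi_{n,\lambda}[K]$ between matching upper and lower bounds of exponential rate $\log(1-e^{-\lambda})$. A natural starting point is the decomposition of unity obtained by conditioning on the size of the component of vertex $1$:
\begin{equation*}
1 \;=\; \sum_{k=1}^{n} \binom{n-1}{k-1}\,\phi_{k,\lambda k/n}[K]\,(1-\lambda/n)^{k(n-k)},
\end{equation*}
from which a Laplace/stationary-phase analysis, together with inductive control of $\phi_{k,\lambda k/n}[K]$ for $k\asymp n$, extracts the rate $\log\pi_1(\lambda)$ for the $k=n$ term.

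The main obstacle is the sharpness of both bounds: the Cayley spanning-tree union bound $\phi_{n,\lambda}[K]\le n^{n-2}p^{n-1}$ gives only the loose rate $\log\lambda$, and the analogous FKG bound applied to the non-isolated vertex events goes in the wrong direction, so a more careful combinatorial accounting is required. The \cite{BCS2007} argument achieves this via a Laplace-type optimisation that identifies the correct saddle for the sum above, balancing the entropy of the partition against the cost of connectivity within the chosen block. Uniformity on compact subsets of $[0,\infty)$ then follows from the continuity of $\lambda\mapsto\log\pi_1(\lambda)$ and the local uniformity of these estimates.
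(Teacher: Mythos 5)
Your reduction $Z_{n,\lambda,q}[K]=q\,\phi_{n,\lambda}[K]$ via the observation that $k(\omega)=1$ on $K$, followed by the citation of Theorem $2.3$ of \cite{BCS2007} and the remark that $\tfrac{1}{n}\log q\to 0$ uniformly, is precisely the argument the paper gives. The extra sketch of the percolation proof is not needed to match the paper (which treats \cite{BCS2007} as a black box here), but it does not affect the correctness of the core step.
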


Observe that the limit obtained in (\ref{Equation Weighted rate function for connectedness}) is independent of $q$. This is no coincidence; on the event $K$, $K_n(\omega)$ is connected and so $k(\omega)=1$. It follows that $Z_{n,\lambda,q}[K]=q\phi_{n,\lambda}[K]$, and so the proof given in (\cite{BCS2007}, Theorem $2.3$) for percolation trivially generalises to the random cluster model.

Next, we compute the exponential rate of the event $L$. On this event, we have the correspondence $k(\omega)=n-\lvert E_n(\omega)\rvert$ between the number of components and edges of the graph $K_n(\omega)$. In particular, it is possible to absorb the cluster weight $q$ into the edge weight and extend (\cite{BCS2007}, Theorem $2.4$) in the following form:

\begin{theorem}\label{Theorem Weighted rate function for acyclic graphs}
Fix $q>0$ and $\lambda>0$. Then
\begin{equation}\label{Equation Weighted rate function for acyclic graphs}
\lim_{n\to\infty}\frac{1}{n}\log Z_{n,\lambda,q}\big[L\big]=\lim_{r\to\infty}\lim_{n\to\infty}\frac{1}{n}\log Z_{n,\lambda,q}\big[B_r\cap L\big]=\Psi(\tfrac{\lambda}{q})-(\tfrac{q-1}{2q})\lambda+\log q.
\end{equation}
Moreover, convergence is uniform for $\lambda$ belonging to compact subsets of $(0,\infty)\setminus\{q\}$.
\end{theorem}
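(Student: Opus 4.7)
The plan is to use the identity $k(\omega)=n-\lvert E_n(\omega)\rvert$, valid on $L$, to absorb the cluster weight factor into the edge weight and reduce the claim to the corresponding percolation statement (\cite{BCS2007}, Theorem~$2.4$). Writing $p=\lambda/n$, for any event $A\subseteq L$ one has
\begin{equation*}
Z_{n,\lambda,q}[A]=q^n\sum_{\omega\in A}\prod_{e\in E_n}(p/q)^{\omega_e}(1-p)^{1-\omega_e}.
\end{equation*}
Setting $c=1-p+p/q=1-\tfrac{\lambda(q-1)}{qn}$ and $p'=(p/q)/c$, the pointwise identity $(p/q)^{\omega_e}(1-p)^{1-\omega_e}=c\cdot(p')^{\omega_e}(1-p')^{1-\omega_e}$ yields
\begin{equation*}
Z_{n,\lambda,q}[A]=q^n c^{\binom{n}{2}}Z_{n,\tilde{\lambda},1}[A],\qquad \tilde{\lambda}:=np'=\tfrac{\lambda}{qc}\to\tfrac{\lambda}{q}.
\end{equation*}
Since $L$ and $B_r\cap L$ are defined purely in terms of the subgraph structure, this reduction applies verbatim to both.

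Dividing by $n$ and taking asymptotics, $q^n$ contributes $\log q$ exactly, the expansion $\log c=-\tfrac{\lambda(q-1)}{qn}+O(n^{-2})$ gives $n^{-1}\binom{n}{2}\log c\to-\tfrac{(q-1)\lambda}{2q}$, and applying the percolation result (\cite{BCS2007}, Theorem~$2.4$) to $Z_{n,\tilde{\lambda},1}[L]$ and $Z_{n,\tilde{\lambda},1}[B_r\cap L]$ identifies the remaining limit (in the appropriate order of $n\to\infty$ followed by $r\to\infty$) as $\Psi(\lambda/q)$. Summing the three contributions yields the advertised value $\Psi(\lambda/q)-\tfrac{q-1}{2q}\lambda+\log q$ for both expressions in (\ref{Equation Weighted rate function for acyclic graphs}).

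The one real obstacle is that $\tilde{\lambda}$ depends on $n$, whereas the percolation result is stated for a fixed parameter. This is precisely where the uniform convergence clause of (\cite{BCS2007}, Theorem~$2.4$) on compact subsets of $(0,\infty)\setminus\{1\}$ must be invoked: for $\lambda\neq q$, the sequence $\tilde{\lambda}$ eventually lies in a compact neighbourhood of $\lambda/q\neq 1$, and uniform convergence together with continuity of $\Psi$ there gives the desired limit, with the analogous statement for $B_r\cap L$ after then sending $r\to\infty$. The uniform convergence asserted in Theorem \ref{Theorem Weighted rate function for acyclic graphs} on compact subsets of $(0,\infty)\setminus\{q\}$ is inherited from the percolation one by the same substitution, since the prefactors $q^n$ and $c^{\binom{n}{2}}$ are (uniformly in $\lambda$ on such compacts) well-controlled functions of $\lambda/n$.
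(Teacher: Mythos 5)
Your reduction is the right idea, and it is essentially the one the paper hints at in the paragraph preceding the theorem statement (``absorb the cluster weight $q$ into the edge weight''). The algebraic identity $Z_{n,\lambda,q}[A]=q^n c^{\binom{n}{2}}Z_{n,\tilde{\lambda},1}[A]$ for $A\subseteq L$, with $c=1-\tfrac{\lambda(q-1)}{qn}$ and $\tilde{\lambda}=\tfrac{\lambda}{qc}\to\tfrac{\lambda}{q}$, is correct, and the bookkeeping of the three contributions $\log q$, $-\tfrac{(q-1)\lambda}{2q}$, and $\Psi(\lambda/q)$ is right.

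The gap is the clause you lean on to close the argument. You write that the $n$-dependence of $\tilde{\lambda}$ ``is precisely where the uniform convergence clause of (\cite{BCS2007}, Theorem~$2.4$) must be invoked,'' but no such clause appears to exist in \cite{BCS2007}. The present paper says explicitly (end of Section 2) that uniform convergence ``is specified in our theorems, while it was not required for the random graphs in \cite{BCS2007},'' and it goes on to \emph{prove} the uniformity from scratch: it reproduces BCS2007's Lemma~4.2 as Lemma~\ref{Lemma Maximiser of Theta}, citing only the existence and location of the maximiser $(s_r,\theta_r)$, and then supplies a new proof of uniform convergence of $s_r$, $\theta_r$, and $\Theta_r(s_r,\theta_r,\alpha)$ on compacts of $(0,\infty)\setminus\{1\}$; it similarly proves Lemma~\ref{Lemma Convergence to continuous supremum}. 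So the labour your reduction would appear to save is precisely the labour the paper actually expends. You should either (i) re-derive the required local uniform convergence of $\tfrac{1}{n}\log Z_{n,\mu,1}[L]$ near $\mu=\lambda/q$ (which amounts to redoing the work in Lemmas~\ref{Lemma Maximiser of Theta}--\ref{Lemma Convergence to continuous supremum}), or (ii) handle the moving parameter $\tilde{\lambda}(n)$ another way, e.g.\ by an $n$-uniform modulus-of-continuity estimate in $\mu$ for $\tfrac{1}{n}\log Z_{n,\mu,1}[L]$, which does not follow from the pointwise statement of \cite{BCS2007}'s Theorem~2.4. As written, the step is a citation to a uniformity that the present paper had to establish itself.

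For contrast, the paper's proof does not change the edge weight at all. It expands $Z_{n,\lambda,q}[L\cap B_r]$ combinatorially as in \eqref{Equation Acyclic rearrangement 1}--\eqref{Equation Acyclic rearrangement 3}, which introduces the factor $q^k$ (with $k$ the number of components) directly into the sum; the effect is that the variable $\alpha$ in the BCS2007 machinery becomes $\lambda/q$ instead of $\lambda$. It then uses Proposition~\ref{Proposition Large nk behaviour of Q} and the uniform versions of Lemmas~\ref{Lemma Maximiser of Theta}--\ref{Lemma Convergence to continuous supremum} to pass to the limit with the uniform control that downstream results (Theorem~\ref{Theorem Free energy of random cluster model on complete graph} in particular) actually require. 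The two routes agree on the pointwise value of the limit; the substantive difference is where the uniform control is produced.
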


As in \cite{BCS2007}, one may prove that the exponential rate of the event that $K_n(\omega)$ contains only small components coincides with the exponential rate of the event that $K_n(\omega)$ is acyclic. This is not surprising, as in \cite{BGJ1996} it was shown that almost all vertices outside of the giant component belong to trees. This argument is summarised in the following analogue of (\cite{BCS2007}, Theorem $2.5$):

\begin{theorem}\label{Theorem Acyclic equals small components}
Fix $q>0$ and $\lambda>0$. Then
\begin{equation}\label{Equation Acyclic equals small components 1}
\lim_{r\to\infty}\liminf_{n\to\infty}\frac{1}{n}\log Z_{n,\lambda,q}\big[B_r\big]=\lim_{r\to\infty}\limsup_{n\to\infty}\frac{1}{n}\log Z_{n,\lambda,q}\big[B_r\big]=\lim_{n\to\infty}\frac{1}{n}\log Z_{n,\lambda,q}\big[L\big]
\end{equation}
and
\begin{equation}\label{Equation Acyclic equals small components 2}
\lim_{\epsilon\downarrow 0}\liminf_{n\to\infty}\frac{1}{n}\log Z_{n,\lambda,q}\big[B_{\epsilon n}\big]=\lim_{\epsilon\downarrow 0}\limsup_{n\to\infty}\frac{1}{n}\log Z_{n,\lambda,q}\big[B_{\epsilon n}\big]=\lim_{n\to\infty}\frac{1}{n}\log Z_{n,\lambda,q}\big[L\big].
\end{equation}
Moreover, convergence is uniform for $\lambda$ belonging to compact subsets of $(0,\infty)\setminus\{q\}$.
\end{theorem}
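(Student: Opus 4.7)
My plan is to reduce both equalities to Theorem \ref{Theorem Weighted rate function for acyclic graphs} via a cycle-removal argument. The lower bounds in (\ref{Equation Acyclic equals small components 1}) and (\ref{Equation Acyclic equals small components 2}) are immediate from the inclusions $B_r \cap L \subseteq B_r$ and $B_{\epsilon n} \cap L \subseteq B_{\epsilon n}$: one has $Z_{n,\lambda,q}[B_r] \geq Z_{n,\lambda,q}[B_r \cap L]$, and Theorem \ref{Theorem Weighted rate function for acyclic graphs} directly identifies $\lim_{r\to\infty}\lim_{n\to\infty} (1/n) \log Z_{n,\lambda,q}[B_r \cap L]$ with the target value. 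The sandwich $B_r \cap L \subseteq B_{\epsilon n} \cap L \subseteq L$ (valid once $r \leq \epsilon n$) combined with the same theorem gives the analogous statement $\lim_{n\to\infty} (1/n) \log Z_{n,\lambda,q}[B_{\epsilon n} \cap L] = \lim_{n\to\infty} (1/n) \log Z_{n,\lambda,q}[L]$ for each $\epsilon > 0$.

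The content of the theorem lies in the matching upper bounds. For each $\omega \in B_r$ I would fix a canonical (e.g.\ lexicographically first) spanning forest $F(\omega) \subseteq E(\omega)$. Since removing a non-bridge edge preserves the number of components, the random cluster weights satisfy $w(\omega) = w(F(\omega)) \cdot (p/(1-p))^{|E(\omega) \setminus F(\omega)|}$, and the extra edges $E(\omega) \setminus F(\omega)$ all lie in the set $N(F(\omega))$ of non-edges of $F(\omega)$ whose two endpoints lie in a common component of $F(\omega)$. Dropping the canonicality constraint gives an overcount, but this is harmless for an upper bound:
\begin{equation*}
Z_{n,\lambda,q}[B_r] \;\leq\; \sum_{\omega_L \in B_r \cap L} w(\omega_L) \sum_{E' \subseteq N(\omega_L)} \Bigl(\frac{p}{1-p}\Bigr)^{|E'|} \;=\; \sum_{\omega_L \in B_r \cap L} w(\omega_L) \, (1-p)^{-|N(\omega_L)|}.
\end{equation*}
For $\omega_L \in B_r \cap L$ all components have size $\leq r$, so $|N(\omega_L)| \leq (r-1)n/2$ and the prefactor is at most $(1-\lambda/n)^{-(r-1)n/2} \to e^{\lambda(r-1)/2}$, a constant on the $(1/n) \log$ scale. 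Taking $\limsup_n$ and then $r \to \infty$ yields (\ref{Equation Acyclic equals small components 1}) via Theorem \ref{Theorem Weighted rate function for acyclic graphs}.

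The same argument applies to (\ref{Equation Acyclic equals small components 2}) with $|N(\omega_L)| \leq \epsilon n^2/2$, now producing a prefactor $(1-\lambda/n)^{-\epsilon n^2/2} = e^{\lambda \epsilon n/2 + o(n)}$; this contributes $\lambda \epsilon /2$ on the $(1/n) \log$ scale, which is killed by sending $\epsilon \downarrow 0$ at the end. Uniformity in $\lambda$ on compacts of $(0,\infty)\setminus\{q\}$ is inherited from the uniform convergence supplied by Theorem \ref{Theorem Weighted rate function for acyclic graphs} together with the uniform convergence $(1-\lambda/n)^{-n} \to e^\lambda$ on bounded $\lambda$. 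The main (mild) obstacle is precisely this order-of-limits issue in (\ref{Equation Acyclic equals small components 2}): since the prefactor is exponentially large in $n$ at fixed $\epsilon$, one cannot send $\epsilon \downarrow 0$ first, but must send $n \to \infty$ first to absorb the $o(n)$ correction and only then let $\epsilon \downarrow 0$ to eliminate the remaining $\lambda \epsilon /2$ penalty.
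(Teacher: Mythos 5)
Your proof is correct, and its overall architecture matches the paper's: both reduce $Z_{n,\lambda,q}[B_r]$ and $Z_{n,\lambda,q}[B_{\epsilon n}]$ to $Z_{n,\lambda,q}[L]$ via Theorem \ref{Theorem Weighted rate function for acyclic graphs} with an exponentially mild correction factor that vanishes in the appropriate limit. However, the key upper-bound inequality is obtained by a genuinely different route. The paper's argument (Lemma \ref{Lemma Small components and acyclic graphs}) conditions on the partition $\{S_j\}$ into connected components, introduces the intermediate event $L_r$, and uses the conditional-probability estimate $\phi_{n,\lambda,q}[C_S\text{ is a tree}\mid K_S]\geq(1-\lambda/n)^{|S|^2/2}$ obtained by comparing $\phi[C_S=T]$ with $\phi[C_S\supset T]$ for a spanning tree $T$; the conclusion is $Z_{n,\lambda,q}[B_r]\leq Z_{n,\lambda,q}[L](1-\lambda/n)^{-rn/2}$. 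Your argument works directly with the unnormalized sum: mapping each $\omega\in B_r$ to a canonical spanning forest $F(\omega)$ plus the residual cycle-edges, observing $k(\omega)=k(F(\omega))$ so the weight factorizes as $w(\omega)=w(F(\omega))(p/(1-p))^{|E(\omega)\setminus F(\omega)|}$, and summing over subsets of $N(\omega_L)$ gives $Z_{n,\lambda,q}[B_r]\leq Z_{n,\lambda,q}[B_r\cap L](1-\lambda/n)^{-(r-1)n/2}$, which is in fact a slightly sharper inequality. Both approaches encode the same underlying observation — once the component structure is fixed, the $q$-factor is fixed and the extra cycle-edges contribute only independent Bernoulli factors — but yours is cast as a direct combinatorial manipulation of the partition function rather than via conditional probabilities, avoiding the auxiliary event $L_r$ entirely. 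Your treatment of the order of limits in (\ref{Equation Acyclic equals small components 2}) (send $n\to\infty$ before $\epsilon\downarrow 0$ to first absorb the $o(n)$ correction and only then kill the $\lambda\epsilon/2$ penalty) and of uniformity on compacts of $(0,\infty)\setminus\{q\}$ is also sound.
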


Observe that the preceding three theorems compute the exponential rates of the events $K$, $L$ and $B_r$ for the random cluster measure \textit{before} normalisation. Indeed, we will compute the rate function for the size of the largest connected component in the following form:

\begin{theorem}\label{Theorem Weighted rate function for size of largest component}
Fix $q>0$ and $\lambda>0$. Then, for every $\theta\in[0,1]$,
\begin{equation}\label{Equation Rate function for size of largest component}
\lim_{\epsilon\downarrow 0}\lim_{n\to\infty}\frac{1}{n}\log Z_{n,\lambda,q}\big[\lvert\mathcal{V}_{\epsilon n}\rvert=\lfloor \theta n \rfloor\big]=\Phi(\theta,\lambda,q).
\end{equation}
Moreover, convergence is uniform for $\lambda$ belonging to compact subsets of $(0,\infty)\setminus\{q\}$.
\end{theorem}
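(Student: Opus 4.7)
The plan is to emulate the decomposition strategy of \cite{BCS2007}: condition on the vertex set of the (unique) large component and factorise the restricted partition function into contributions from inside that set, its complement, and the boundary between them. First, by Lemma \ref{Lemma Uniqueness of large components}, I replace the event $\{|\mathcal{V}_{\epsilon n}|=\lfloor\theta n\rfloor\}$ with the sharper $\{|\mathcal{V}_{\epsilon n}|=\lfloor\theta n\rfloor,\mathcal{N}_{\epsilon n}=1\}$; the two $Z$-values differ multiplicatively by a factor in $[1-e^{-cn},1]$ and hence share the same exponential rate. The degenerate cases $\theta=0$ and $\theta=1$ reduce immediately to Theorems \ref{Theorem Acyclic equals small components} and \ref{Theorem Weighted rate function for connectedness}, so the interesting range is $\theta\in(0,1)$.

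Fix $A\subset V_n$ with $|A|=\lfloor\theta n\rfloor$. On the event $\{\mathcal{V}_{\epsilon n}=A,\mathcal{N}_{\epsilon n}=1\}$ all $|A|(n-|A|)$ edges across $(A,A^c)$ are forced closed, $\omega|_A$ is connected on $K_{|A|}$, $\omega|_{A^c}$ lies in $B_{\epsilon n}$ on $K_{n-|A|}$, and $k(\omega)=1+k(\omega|_{A^c})$. Keeping the physical edge probability $p=\lambda/n$ throughout, the induced random cluster measures on $K_{|A|}$ and $K_{n-|A|}$ carry the effective parameters $\lambda_1:=\lambda|A|/n\to\lambda\theta$ and $\lambda_2:=\lambda(n-|A|)/n\to\lambda(1-\theta)$ in the paper's normalisation. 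Summing over the $\binom{n}{\lfloor\theta n\rfloor}$ choices of $A$ yields the exact factorisation
\begin{equation*}
Z_{n,\lambda,q}\big[|\mathcal{V}_{\epsilon n}|=\lfloor\theta n\rfloor,\mathcal{N}_{\epsilon n}=1\big]=\binom{n}{\lfloor\theta n\rfloor}\bigl(1-\tfrac{\lambda}{n}\bigr)^{|A|(n-|A|)}Z_{|A|,\lambda_1,q}[K]\,Z_{n-|A|,\lambda_2,q}[B_{\epsilon n}].
\end{equation*}

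Taking $\frac{1}{n}\log$, the four factors contribute respectively $-S(\theta)$ (Stirling), $-\lambda\theta(1-\theta)=(1-\theta)\log[1-\pi_1(\lambda\theta)]$ (Taylor expanding $\log(1-\lambda/n)$), $\theta\log\pi_1(\lambda\theta)$ (Theorem \ref{Theorem Weighted rate function for connectedness} applied on $K_{|A|}$ with parameter $\lambda_1$), and $(1-\theta)\{\Psi(\tfrac{\lambda(1-\theta)}{q})-\tfrac{q-1}{2q}\lambda(1-\theta)+\log q\}$ (Theorems \ref{Theorem Weighted rate function for acyclic graphs} and \ref{Theorem Acyclic equals small components} applied on $K_{n-|A|}$ with parameter $\lambda_2$, after sending $\epsilon\downarrow 0$; the threshold $\epsilon n$ restricted to $A^c$ reads as $\epsilon/(1-\theta)\cdot|A^c|$ and still vanishes as $\epsilon\downarrow 0$). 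Their sum is exactly $\Phi(\theta,\lambda,q)$.

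The main obstacle is that each cited exponential-rate theorem gives its rate only up to an $o(n)$ error, while the effective parameters $\lambda_1,\lambda_2$ depend on $n$. This is precisely what the uniform convergence on compact subsets of $(0,\infty)\setminus\{q\}$ asserted in Theorems \ref{Theorem Weighted rate function for connectedness}, \ref{Theorem Weighted rate function for acyclic graphs}, and \ref{Theorem Acyclic equals small components} is designed to handle: for large $n$ the drifting parameters $\lambda_1,\lambda_2$ lie in a compact neighbourhood of their limits (chosen to avoid $q$ whenever $\lambda\theta\neq q$ and $\lambda(1-\theta)\neq q$), so the uniform convergence transfers the rate from the limiting parameters to the drifting ones. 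The uniformity in $\lambda$ claimed for the present theorem is then inherited from the uniformity in the three cited theorems.
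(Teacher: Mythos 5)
Your proposal is correct and follows essentially the same route as the paper's own proof: reduce to the single-giant-component event via Lemma \ref{Lemma Uniqueness of large components}, condition on the vertex set $A$ of that component to obtain the exact factorisation into a binomial term, a cross-edge term $(1-\lambda/n)^{|A|\,|A^c|}$, $Z_{|A|,\lambda_1,q}[K]$ and $Z_{n-|A|,\lambda_2,q}[B_{\epsilon n}]$, and then apply Theorems \ref{Theorem Weighted rate function for connectedness}, \ref{Theorem Weighted rate function for acyclic graphs} and \ref{Theorem Acyclic equals small components} to the last two factors. If anything you are slightly more explicit than the paper on one point: the paper simplifies by assuming $\theta n\in\mathbb{Z}$, so the effective parameters are exactly $\lambda\theta$ and $\lambda(1-\theta)$, whereas you keep $\lfloor\theta n\rfloor$ and correctly invoke the uniform-in-$\lambda$ convergence of the three cited theorems to absorb the $O(1/n)$ drift in $\lambda_1,\lambda_2$. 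Two small remarks: for $\theta=0$ you should cite both Theorem \ref{Theorem Weighted rate function for acyclic graphs} and Theorem \ref{Theorem Acyclic equals small components} (the former supplies the value, the latter transfers it from $L$ to $B_{\epsilon n}$); and the restriction ``$\lambda\theta\neq q$'' is unnecessary since Theorem \ref{Theorem Weighted rate function for connectedness} is uniform on all compacts of $[0,\infty)$ --- only $\lambda(1-\theta)\neq q$ matters, a caveat the paper leaves implicit but which you are right to notice.
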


In order to turn these theorems into statements about probabilities in the random cluster model, we will reintroduce the partition function using the following result:

\begin{theorem}\label{Theorem Free energy of random cluster model on complete graph}
Fix $q>0$ and $\lambda>0$. Then
\begin{equation}\label{Equation Free energy of random cluster model on complete graph}
\lim_{n\to\infty}\frac{1}{n}\log Z_{n,\lambda,q}=\sup_{\theta\in[0,1]}\Phi(\theta,\lambda,q).
\end{equation}
\end{theorem}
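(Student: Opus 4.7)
The plan is to extract this result from Theorem \ref{Theorem Weighted rate function for size of largest component} via the exact decomposition
\begin{equation*}
Z_{n,\lambda,q}=\sum_{k=0}^{n}Z_{n,\lambda,q}\bigl[\lvert\mathcal{V}_{\epsilon n}\rvert=k\bigr],
\end{equation*}
valid for every $\epsilon\in(0,1)$. Since this is a sum of at most $n+1$ nonnegative terms, it is exponentially equivalent to its maximum, and the summand indexed by $k=\lfloor\theta n\rfloor$ has rate $\Phi(\theta,\lambda,q)$, so the rate of $Z_{n,\lambda,q}$ itself ought to be $\sup_{\theta\in[0,1]}\Phi(\theta,\lambda,q)$.

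For the lower bound, fix $\theta\in[0,1]$ and retain only the term $k=\lfloor\theta n\rfloor$. Applying Theorem \ref{Theorem Weighted rate function for size of largest component} (first $n\to\infty$, then $\epsilon\downarrow 0$, then supremizing over $\theta$) yields
\begin{equation*}
\liminf_{n\to\infty}\frac{1}{n}\log Z_{n,\lambda,q}\;\geq\;\sup_{\theta\in[0,1]}\Phi(\theta,\lambda,q).
\end{equation*}

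For the upper bound, the pigeonhole gives
\begin{equation*}
\frac{1}{n}\log Z_{n,\lambda,q}\;\leq\;\frac{\log(n+1)}{n}+\max_{0\leq k\leq n}\frac{1}{n}\log Z_{n,\lambda,q}\bigl[\lvert\mathcal{V}_{\epsilon n}\rvert=k\bigr].
\end{equation*}
Choose a maximizer $k_n=k_n(\epsilon)$ and, by compactness, extract a subsequence along which $k_n/n\to\theta^*(\epsilon)\in[0,1]$. It would then suffice to establish
\begin{equation*}
\limsup_{n\to\infty}\frac{1}{n}\log Z_{n,\lambda,q}\bigl[\lvert\mathcal{V}_{\epsilon n}\rvert=k_n\bigr]\;\leq\;\Phi(\theta^*(\epsilon),\lambda,q)+o_\epsilon(1),
\end{equation*}
after which sending $\epsilon\downarrow 0$ and invoking continuity of $\Phi(\cdot,\lambda,q)$ on $[0,1]$ (easily read off from \eqref{Equation Phi function}, with boundary values $\Phi(0,\lambda,q)=\Psi(\lambda/q)-\tfrac{(q-1)\lambda}{2q}+\log q$ agreeing with Theorem \ref{Theorem Acyclic equals small components} and $\Phi(1,\lambda,q)=\log\pi_1(\lambda)$ agreeing with Theorem \ref{Theorem Weighted rate function for connectedness}) closes the argument.

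The hard part is precisely the last displayed inequality: Theorem \ref{Theorem Weighted rate function for size of largest component} supplies the rate along the fixed sequence $\lfloor\theta n\rfloor$ for one $\theta$ at a time, whereas here we must evaluate it along the variable sequence $k_n$ with $k_n/n\to\theta^*(\epsilon)$. The natural remedy is to promote the convergence in Theorem \ref{Theorem Weighted rate function for size of largest component} to local uniformity in $\theta$ on each $[\delta,1-\delta]$. Such uniformity should be readable from the proof of Theorem \ref{Theorem Weighted rate function for size of largest component}, which I expect factorizes $Z_{n,\lambda,q}[\lvert\mathcal{V}_{\epsilon n}\rvert=\lfloor\theta n\rfloor]$ as a combinatorial prefactor $\binom{n}{\lfloor\theta n\rfloor}$ times a connectedness contribution governed by Theorem \ref{Theorem Weighted rate function for connectedness} at effective parameter $\lambda\theta$, times a no-large-component contribution governed by Theorem \ref{Theorem Acyclic equals small components} at effective parameter $\lambda(1-\theta)$; the uniformity in the parameter granted by Theorems \ref{Theorem Weighted rate function for connectedness} and \ref{Theorem Acyclic equals small components} then translates, after Stirling for the binomial, into the required uniformity in $\theta$. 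The two endpoints $\theta\in\{0,1\}$ are handled separately by restricting the sum either to $k\leq\delta n$ (reducing directly to Theorem \ref{Theorem Acyclic equals small components}) or to $k\geq(1-\delta)n$ (reducing to Theorem \ref{Theorem Weighted rate function for connectedness} together with Lemma \ref{Lemma Uniqueness of large components}), with continuity of $\Phi$ absorbing the $o_\delta(1)$ error as $\delta\downarrow 0$.
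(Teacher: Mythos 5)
Your framework matches the paper's: both decompose $Z_{n,\lambda,q}$ over the values of $\lvert\mathcal{V}_{\epsilon n}\rvert$, invoke the Laplace principle, and reduce the upper bound to some uniformity-in-$\theta$ statement. The lower bound is fine as written. The gap is in the upper bound.

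You correctly identify that the crux is promoting Theorem \ref{Theorem Weighted rate function for size of largest component} to uniformity in $\theta$, and you propose to obtain this from the uniformity-in-$\lambda$ of Theorems \ref{Theorem Weighted rate function for connectedness} and \ref{Theorem Acyclic equals small components} applied at the effective parameters $\lambda\theta$ and $\lambda(1-\theta)$. The problem is that Theorems \ref{Theorem Weighted rate function for acyclic graphs} and \ref{Theorem Acyclic equals small components} are uniform only on compact subsets of $(0,\infty)\setminus\{q\}$. Consequently, the no-large-component contribution $Z_{(1-\theta)n,\lambda(1-\theta),q}[B_{\epsilon n}]$ is \emph{not} approximated uniformly over any interval of $\theta$ containing the interior point $\theta=1-q/\lambda$ (which lies in $(0,1)$ whenever $\lambda>q$). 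Your remedy of cutting off a $\delta$-neighbourhood of $\theta\in\{0,1\}$ and invoking continuity of $\Phi$ does not touch this interior singular point; continuity of $\Phi$ there is true (the function $\Psi$ is continuous at $1$) but irrelevant, since what fails is uniformity of the finite-$n$ approximation, not continuity of the limit. This is precisely where the paper departs from the clean Laplace argument: it proves tail bounds (\ref{Equation 4 Free energy of random cluster model on complete graph proof})--(\ref{Equation 5 Free energy of random cluster model on complete graph proof}) showing that the summands with $k/n$ near $1$ or near $1-q/\lambda$ are negligible, using stochastic domination by percolation (which requires $q\geq 1$) together with the law of large numbers for the percolation giant component (Theorem \ref{Theorem LLN percolation giant component}). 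Without some substitute for these tail estimates, your Laplace step does not close. Relatedly, your handling of $\theta$ near $1$ via ``Theorem \ref{Theorem Weighted rate function for connectedness} plus continuity'' is also optimistic: the paper kills this region by the same LLN argument, not by an estimate that follows from the connectedness rate alone.

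\end{document}
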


The limit of equation (\ref{Equation Free energy of random cluster model on complete graph}) is known as the \textit{free energy} of the random cluster model, and is not a new quantity of interest; it was computed in (\cite{BGJ1996}, Theorem $2.6$). In Lemma \ref{Theorem Restatement of free energy}, we show that our computation agrees with theirs.

The structure of the free energy provides some hint as to its derivation. Indeed, for fixed $\epsilon > 0$, one may decompose the partition function $Z_{n,\lambda,q}$ according to the number of vertices in components of size at least $\epsilon n$ to obtain
\begin{equation}\label{Equation Partition decomposition}
Z_{n,\lambda,q}=\sum_{k=0}^n Z_{n,\lambda,q}\big[\lvert\mathcal{V}_{\epsilon n}\rvert = (\tfrac{k}{n})n\big].
\end{equation}
We seek to apply Theorem \ref{Theorem Weighted rate function for size of largest component} to each of the terms on the right hand side of (\ref{Equation Partition decomposition}). We may then dominate the sum in (\ref{Equation Partition decomposition}) in terms of its largest summand, and apply the \textit{Laplace Principle} when taking the appropriate limit. However, these steps involve some technicalities, which will be discussed in detail in Section $5$. In particular, uniform convergence of the rate function is required in order to pass to a supremum in the limit. This is the reason why it is specified in our theorems, while it was not required for the random graphs in \cite{BCS2007}.

\section{Acyclic graphs}

In this section, we prove Theorems \ref{Theorem Weighted rate function for acyclic graphs} and \ref{Theorem Acyclic equals small components} by analysing the quantity $Z_{n,\lambda,q}[L\cap B_r]$, as was done for percolation in \cite{BCS2007}. Let $m_l$ be the number of connected components of size $l$ in $K_n(\omega)$. Then, the weight of a configuration $\omega\in L\cap B_r$ is given by
\begin{equation}
Z_{n,\lambda,q}[\omega]=q^{\sum m_l}(1-\tfrac{\lambda}{n})^{{n\choose 2}-n+\sum m_l}\prod_{l\geq 1}(\tfrac{\lambda}{n})^{(l-1)m_l}.
\end{equation}
Moreover, conditionally on the numbers $m_1,\cdots,m_n$, there are $\frac{n!}{\prod_lm_l!(l!)^{m_l}}$ ways of choosing appropriately sized components in $K_n(\omega)$. In addition, each component of size $l$ has $a_l=l^{l-2}$ possible spanning trees. Thus
\begin{equation}\label{Equation Acyclic rearrangement 1}
Z_{n,\lambda,q}[L\cap B_r]=\displaystyle\sum_{\substack{\sum lm_l=n \\ m_l=0\forall l>r}}\frac{n!}{\prod_l[m_l!(l!)^{m_l}]}q^{\sum m_l}(1-\tfrac{\lambda}{n})^{{n\choose 2}-n+\sum m_l}\prod_{l\geq 1}\bigg[a_l(\tfrac{\lambda}{n})^{l-1}\bigg]^{m_l}.
\end{equation}
Introducing the sum
\begin{equation}\label{Equation Acyclic rearrangement 2}
Q_{n,k,r}=\displaystyle\sum_{\substack{\sum lm_l=n\\\sum m_l=k\\ m_l=0\forall l>r}}\prod_{l\geq 1}\bigg(\frac{a_l}{l!}\bigg)^{m_l}\frac{1}{m_l!}
\end{equation}
allows us to rewrite (\ref{Equation Acyclic rearrangement 1}) as
\begin{equation}\label{Equation Acyclic rearrangement 3}
Z_{n,\lambda,q}[L\cap B_r]=n!(\tfrac{\lambda}{n})^n(1-\tfrac{\lambda}{n})^{{n\choose 2}-n}\sum_{k=1}^n(\tfrac{\lambda}{n})^{-k}[q(1-\tfrac{\lambda}{n})]^kQ_{n,k,r}.
\end{equation}
The equation (\ref{Equation Acyclic rearrangement 3}) employs the same re-arrangement as that done for percolation in \cite{BCS2007}, with the random cluster model introducing an extra factor of $q^k$ in the summand. We cite the following proposition regarding the asymptotic behaviour of the quantity $Q_{n,k,r}$:

\begin{proposition}[\cite{BCS2007}, Proposition $4.1$]\label{Proposition Large nk behaviour of Q}
Consider the polynomial
\begin{equation}
F_r(s)=\sum_{l=1}^r\frac{s^la_l}{l!}.
\end{equation}
Then for all $n,k,r\geq 1$
\begin{equation}
Q_{n,k,r}\leq\frac{1}{k!}\inf_{s>0}\frac{F_r(s)^k}{s^n}.
\end{equation}
Moreover, for each $\eta>0$ there is an $n_0<\infty$ and a sequence $(c_r)_{r\geq 1}$ of positive numbers such that for all $n\geq n_0$, $k\geq 1$ and $r\geq 2$ such that $k<(1-\eta)n$ and $rk>n(1+\eta)$, we have
\begin{equation}
Q_{n,k,r}\geq\frac{c_r}{\sqrt{n}}\frac{1}{k!}\inf_{s>0}\frac{F_r(s)^k}{s^n}.
\end{equation}
\end{proposition}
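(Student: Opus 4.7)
The plan is to derive both bounds from a single saddle-point representation of $Q_{n,k,r}$. For each $s>0$, introduce the probability distribution on $\{1,\ldots,r\}$ with mass function $p_l(s) = s^l a_l/(l!\,F_r(s))$, and let $X_1,\ldots,X_k$ be i.i.d.\ samples from this law, $S_k = X_1+\cdots+X_k$. Expanding $F_r(s)^k$ by the multinomial theorem and collecting only the terms of total degree $n$ in $s$ yields the key identity
\begin{equation*}
Q_{n,k,r} \;=\; \frac{F_r(s)^k}{k!\,s^n}\;\mathbb{P}_s(S_k = n),
\end{equation*}
valid for every $s>0$. The upper bound is then immediate, since $\mathbb{P}_s(S_k=n)\leq 1$ for all $s$; minimising over $s$ gives the claimed infimum.

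For the lower bound I would select the saddle point $s=s_{n,k}$ realising $\inf_{s>0} F_r(s)^k/s^n$. Differentiating shows this is equivalent to $\mathbb{E}_s[X_1] = n/k$. The hypotheses $k<(1-\eta)n$ and $rk>(1+\eta)n$ force $n/k$ to lie in the compact interval $[1/(1-\eta),\,r/(1+\eta)]\subset(1,r)$. Since $s\mapsto \mathbb{E}_s[X_1]$ is smooth and strictly increasing from $1$ to $r$ as $s$ runs over $(0,\infty)$, the saddle point $s_{n,k}$ exists and remains in a fixed compact subset $K\subset(0,\infty)$ depending only on $r$ and $\eta$. On $K$ the variance $\sigma^2_s = \mathrm{Var}_s(X_1)$ is bounded below by some $\sigma^2_r>0$, because $X_1$ is non-degenerate: for $r\geq 2$ it assumes both values $1$ and $2$ with positive probability.

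With these uniform estimates in hand, the final step is to apply a local central limit theorem for sums of i.i.d.\ integer-valued random variables, where the laws range over the compact family $\{p_l(s):s\in K\}$. Since the support contains $1$ and $2$, which are coprime, the walk is aperiodic on $\mathbb{Z}$ and one obtains
\begin{equation*}
\mathbb{P}_{s_{n,k}}(S_k=n) \;\geq\; \frac{c'_r}{\sqrt{k}}
\end{equation*}
for all sufficiently large $k$. Combining this with $k\geq n(1+\eta)/r$ yields the stated $c_r/\sqrt{n}$ bound and completes the proof.

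I expect the main obstacle to be the \emph{uniformity} of the local CLT across the parameter family $\{p_l(s):s\in K\}$: off-the-shelf statements give pointwise asymptotics in $k$ for a fixed distribution, whereas here the distribution drifts with $(n,k)$. The standard remedies---a quantitative Edgeworth expansion, or a characteristic-function argument exploiting a uniform bound $|\widehat{p_s}(\xi)|\leq 1-\delta(\xi)$ for $\xi$ bounded away from multiples of $2\pi$---go through once the compactness of $K$ and the positive lower bound on $\sigma^2_s$ are established. Everything else, including the identification of the saddle point and the combinatorial identity itself, is routine.
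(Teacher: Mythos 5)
Note first that the paper does not prove this proposition; it is lifted verbatim from Biskup, Chayes and Smith (their Proposition 4.1) and is simply invoked in Section 3. There is therefore no in-paper proof to compare against, but your argument is a complete (up to two small repairs) and correct proof, and the tilting/saddle-point/local-CLT route you take is the standard and natural one for this kind of exponential-generating-function enumeration.

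The combinatorial identity is exactly right: the multinomial expansion gives $[s^n]F_r(s)^k = k!\,Q_{n,k,r}$, and writing the coefficient extraction through the tilted law $p_l(s)=s^la_l/(l!F_r(s))$ yields $Q_{n,k,r}=\frac{F_r(s)^k}{k!\,s^n}\mathbb{P}_s(S_k=n)$ for every $s>0$; the upper bound follows immediately. Your identification of the saddle as $\mathbb{E}_s[X_1]=n/k$, the confinement of $n/k$ to $[1/(1-\eta),\,r/(1+\eta)]\subset(1,r)$ and hence of $s_{n,k}$ to a compact $K_{r,\eta}$, the lower bound on the variance from $a_1,a_2>0$, and the aperiodicity from $\gcd(1,2)=1$ are all correct and are precisely the inputs a uniform local CLT needs.

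Two points to tighten. First, your last sentence uses $k\ge n(1+\eta)/r$ to convert $c_r'/\sqrt{k}$ into $c_r/\sqrt{n}$; this inequality points the wrong way (it bounds $1/\sqrt{k}$ from \emph{above}). The correct observation is the other hypothesis: $k<(1-\eta)n<n$ gives $1/\sqrt{k}>1/\sqrt{n}$ directly, so one may take $c_r=c_r'$. The constraint $rk>(1+\eta)n$ is still needed, but for the other purpose you implicitly use it for: guaranteeing $k\to\infty$ along the admissible sequence so that the ``for sufficiently large $k$'' clause of the local CLT is eventually satisfied. Second, and related, the proposition asks for an $n_0$ depending only on $\eta$ (not on $r$), whereas the local CLT threshold is some $k_0(r,\eta)$. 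You should say explicitly how to dispose of the residual cases $k\le k_0(r,\eta)$: there the second hypothesis forces $n<rk_0/(1+\eta)$, so for each fixed $r$ only finitely many admissible pairs $(n,k)$ arise, and on each of them $\mathbb{P}_{s_{n,k}}(S_k=n)>0$ because the hypotheses imply $k<n<rk$, which is exactly the support condition. Shrinking $c_r$ to cover these finitely many pairs closes the argument without touching $n_0$.
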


Applying Stirling's Formula to the factorial term, we may use Proposition \ref{Proposition Large nk behaviour of Q} to obtain an upper bound on the summand in (\ref{Equation Acyclic rearrangement 3}) of
\begin{equation}
(\tfrac{\lambda}{n})^{-k}[q(1-\tfrac{\lambda}{n})]^kQ_{n,k,r}\leq e^{o(n)}\inf_{s>0}\exp\{n\Theta_r(s,k/n,\lambda/q)\},
\end{equation}
where the error term is bounded uniformly for $\lambda$ belonging to compact subsets of $(0,\infty)$, and the function $\Theta_r(s,\theta,\alpha)$ is given by
\begin{equation}\label{Equation Theta function}
\Theta_r(s,\theta,\alpha)=-\theta\log\alpha-\theta\log\theta+\theta+\theta\log F_r(s)-\log s.
\end{equation}
The notation $\alpha=\lambda/q$ is chosen to agree with \cite{BCS2007}. Notice that a sum of $n$ terms is sandwiched between its maximal term and $n$ times its maximal term. This implies an upper bound on the equation (\ref{Equation Acyclic rearrangement 3}) of
\begin{equation}\label{Equation Acyclic upper bound}
Z_{n,\lambda,q}[L\cap B_r]\leq n!n(\tfrac{\lambda}{n})^n(1-\tfrac{\lambda}{n})^{{n\choose 2}-n} e^{o(n)}\sup_k\inf_{s>0}\exp\{n\Theta_r(s,k/n,\lambda/q)\}
\end{equation}
and a lower bound (for fixed $\eta >0$) on the equation (\ref{Equation Acyclic rearrangement 3}) of
\begin{equation}\label{Equation Acyclic lower bound}
Z_{n,\lambda,q}[L\cap B_r]\geq n!(\tfrac{\lambda}{n})^n(1-\tfrac{\lambda}{n})^{{n\choose 2}-n} e^{o(n)}\sup_k\inf_{s>0}\exp\{n\Theta_r(s,k/n,\lambda/q)\},
\end{equation}
where the supremum is now taken over $k$ satisfying $\tfrac{1}{r}(1+\eta)n<k<(1-\eta)n$. If we can show that the supremum of $\Theta$ is contained in this interval (for sufficiently small $\eta$ and sufficiently large $r$) then the bounds (\ref{Equation Acyclic upper bound}) and (\ref{Equation Acyclic lower bound}) will coincide. This supremum is evaluated in (\cite{BCS2007}, Lemma $4.2$). We reproduce their lemma here, as it will be important to check that any convergence is uniform:

\begin{lemma}[\cite{BCS2007}, Lemma $4.2$]\label{Lemma Maximiser of Theta}
Let $\alpha>0$ and $r\geq 2$. Then there is a unique $(s_r,\theta_r)\in [0,\infty]\times[1/r,1]$ for which
\begin{equation}\label{Equation 1 Maximiser of Theta}
\Theta_r(s_r,\theta_r,\alpha)=\sup_{1/r\leq \theta\leq 1}\inf_{s>0}\Theta_r(s,\theta,\alpha).
\end{equation}
Moreover, $s_r\in(0,\infty)$ satisfies the limit
\begin{equation}\label{Equation 2 Maximiser of Theta}
\lim_{r\to\infty}s_r=\begin{cases}\alpha e^{-\alpha} &\text{ if }\alpha\leq 1\\ \tfrac{1}{e} &\text{ if } \alpha > 1\end{cases}
\end{equation}
and $\theta_r\in(1/r,1)$ satisfies the limit
\begin{equation}\label{Equation 3 Maximiser of Theta}
\lim_{r\to\infty}\theta_r=\begin{cases}1-\tfrac{\alpha}{2} &\text{ if }\alpha\leq 1\\ \tfrac{1}{2\alpha} &\text{ if } \alpha > 1\end{cases}
\end{equation}
Combining (\ref{Equation 2 Maximiser of Theta}) and (\ref{Equation 3 Maximiser of Theta}) yields the limit
\begin{equation}\label{Equation 4 Maximiser of Theta}
\lim_{r\to\infty}\Theta_r(s_r,\theta_r,\alpha)=\begin{cases}1+\tfrac{\alpha}{2}-\log\alpha&\text{ if } \alpha \leq 1\\1+\tfrac{1}{2\alpha} &\text{ if } \alpha > 1\end{cases}
\end{equation}
Moreover, convergence is uniform for $\alpha$ belonging to compact subsets of $(0,\infty)\setminus\{1\}$.
\end{lemma}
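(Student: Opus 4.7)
The approach is to reduce the two-variable saddle-point problem to a single scalar critical equation, identify its $r\to\infty$ limit via the classical tree function of enumerative combinatorics, and verify uniformity in $\alpha$ by the implicit function theorem in the subcritical regime together with a direct boundary analysis in the supercritical regime.

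First, for fixed $\theta\in(1/r,1)$ the function $s\mapsto\Theta_r(s,\theta,\alpha)$ diverges to $+\infty$ at both endpoints $s=0^+$ and $s=+\infty$ (since $F_r$ behaves like $s$ near $0$ and like a positive multiple of $s^r$ near $\infty$), so the inner infimum is attained in $(0,\infty)$. The critical condition $\partial_s\Theta_r=0$ reads $sF_r'(s)/F_r(s)=1/\theta$; the left side is the mean of the probability distribution $(a_l/l!)s^l/F_r(s)$ on $\{1,\ldots,r\}$, which is strictly increasing in $s$ by monotone-likelihood-ratio stochastic domination, yielding a unique interior minimiser $s(\theta)$. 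The envelope theorem then gives $\partial_\theta\Theta_r|_{s(\theta)}=\log F_r(s(\theta))-\log\theta-\log\alpha$, and combining this with the inner critical equation collapses the saddle-point to
\begin{equation*}
sF_r'(s)=\alpha,\qquad \theta_r=F_r(s_r)/\alpha,
\end{equation*}
which has a unique solution $s_r\in(0,\infty)$ since $sF_r'(s)$ is a polynomial strictly increasing from $0$ to $+\infty$.

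Second, I would pass to the limit via the tree function $T(s)=\sum_{l\geq 1}l^{l-1}s^l/l!$ of radius of convergence $1/e$, which satisfies $T(s)=se^{T(s)}$, $T(1/e)=1$, and the identities $sF'(s)=T(s)$ and $F(s)=T(s)-T(s)^2/2$ for the limiting series $F$ (the latter follows by integrating $F'(u)=T(u)/u=T'(u)(1-T(u))$). For $\alpha\leq 1$, the equation $T(s)=\alpha$ has the unique solution $s^*=\alpha e^{-\alpha}\in[0,1/e]$, yielding $\theta^*=1-\alpha/2$. For $\alpha>1$, $T(s)=\alpha$ has no solution in $[0,1/e]$, and since $sF_r'(s)\to+\infty$ for each fixed $s>1/e$, the root $s_r$ must approach $1/e$ from above; a Stirling-based tail estimate of the partial sums of $T$ and $F$ then shows $s_r\to 1/e$ and $F_r(s_r)\to F(1/e)=1/2$, so $\theta_r\to 1/(2\alpha)$. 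Substituting $F_r(s_r)=\alpha\theta_r$ back into $\Theta_r$ collapses the critical value to $\Theta_r(s_r,\theta_r,\alpha)=\theta_r-\log s_r$, whose limits in the two regimes are $1+\alpha/2-\log\alpha$ and $1+1/(2\alpha)$ respectively.

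The main obstacle, and the reason for excluding $\alpha=1$, is uniform convergence in $\alpha$. For $\alpha$ ranging over a compact subset of $(0,1)$, $s^*(\alpha)=\alpha e^{-\alpha}$ lies in a compact subset of $(0,1/e)$; since the power-series convergences $sF_r'(s)\to T(s)$ and $F_r(s)\to F(s)$ are uniform on such compact sets, and $T'(s)$ is positive and bounded above there, the implicit function theorem applied to $sF_r'(s)-\alpha=0$ gives uniform convergence of $s_r$, and hence of $\theta_r$ and of $\Theta_r(s_r,\theta_r,\alpha)=\theta_r-\log s_r$. For $\alpha$ in a compact subset of $(1,\infty)$ the implicit function theorem fails because $s_r\to 1/e$ lies on the singular boundary of $T$, but the monotonicity of $sF_r'(s)$ and a uniform Stirling estimate of the tails of $T$ and $F$ near $s=1/e$ yield $s_r-1/e=O(r^{-1/2})$ uniformly in $\alpha$, together with the corresponding uniform control of $\theta_r$ and $\Theta_r(s_r,\theta_r,\alpha)$. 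The two regimes cannot be spliced across $\alpha=1$ because $s^*(\alpha)$ has a jump discontinuity there.
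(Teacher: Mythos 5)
Your proposal is correct and follows essentially the same route as the paper's proof: both reduce to the critical equations $sF_r'(s)=\alpha$, $F_r(s)=\alpha\theta$, identify the limiting power series with the tree function $W(s)$ (which you call $T$, satisfying $We^{-W}=s$, with the associated identity $F=W-W^2/2$), split into the subcritical regime $\alpha<1$ where the solution lies strictly inside the disc of convergence and the supercritical regime $\alpha>1$ where $s_r$ is pinned to the boundary $1/e$ via a Stirling tail estimate, and establish uniformity separately on compacts avoiding $\alpha=1$. The only cosmetic differences are that you make explicit the collapse $\Theta_r(s_r,\theta_r,\alpha)=\theta_r-\log s_r$ (which the paper uses implicitly when combining the limits for $s_r$ and $\theta_r$) and you phrase subcritical uniformity via the implicit function theorem while the paper bounds the defect $\Delta_r(s)=W(s)-sF_r'(s)$ directly; these are equivalent in substance.
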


Note that we may rewrite the limit (\ref{Equation 4 Maximiser of Theta}) as
\begin{equation}
\lim_{r\to\infty}\Theta_r(s_r,\theta_r,\alpha)=1+\tfrac{\alpha}{2}-\log\alpha+\Psi(\alpha).
\end{equation}
As $\alpha=\lambda/q$, the convergence (in $r$) will be uniform for $\lambda$ belonging to compact subsets of $(0,\infty)\setminus\{q\}$, which is precisely the claim of Theorem \ref{Theorem Weighted rate function for acyclic graphs}.

\begin{proof}
We cite the fact that $(s_r,\theta_r)\in(0,\infty)\times(1/r,1)$ from (\cite{BCS2007}, Lemma $4.2$). By setting the partial derivatives of $\Theta_r$ equal to $0$, we see the maximising pair $(s_r,\theta_r)$ is a solution of the equations
\begin{equation}\label{Equation 1 Proof Maximiser of Theta}
sF_r'(s)=\alpha,\ F_r(s)=\alpha\theta.
\end{equation}
We analyse each of these equations in turn, beginning with the equation $sF_r'(s)=\alpha$. By definition, $sF_r'(s)$ is equal to the sum
\begin{equation}\label{Equation 2 Proof Maximiser of Theta}
\sum_{l=1}^r a_l\frac{s^l}{(l-1)!}=\sum_{l=1}^r\frac{1}{l}\frac{l^l}{l!}s^l.
\end{equation}
Applying Stirling's Formula to the term $l^l/l!$, we see that for $s>1/e$, the sum in (\ref{Equation 2 Proof Maximiser of Theta}) diverges. In the case $s\leq 1/e$, (\ref{Equation 2 Proof Maximiser of Theta}) converges to the function $W(s)$ satisfying $We^{-W}=s$ (a fact which we again cite from \cite{BCS2007}). 

We now split the argument into two cases, supposing first that $\alpha$ belongs to a bounded interval $[\alpha_0,\alpha_1]$ with $\alpha_1<1$. Define
\begin{equation}\label{Equation 3 Proof Maximiser of Theta}
r_1=\inf\{r:\ (1/e)F_r'(1/e)>\alpha_1\},
\end{equation}
and observe that for every $r\geq r_1$ and $\alpha\leq\alpha_1$, we must have $s_r < 1/e$. In particular, the error
\begin{equation}\label{Equation 4 Proof Maximiser of Theta}
\Delta_r(s):=W(s)-sF_r'(s)
\end{equation}
may be uniformly bounded by $\Delta_r:=\Delta_r(1/e)$, which converges to $0$ as $r\to\infty$. As $s_rF_r'(s_r)=\alpha$, we see that $\lvert W(s_r)-\alpha \rvert \leq \Delta_r$. Moreover, by the Mean Value Theorem, we have
\begin{equation}\label{Equation 6 Proof Maximiser of Theta}
\lvert s_r-\alpha e^{-\alpha}\rvert=\lvert W(s_r)e^{-W(s_r)}-\alpha e^{-\alpha}\rvert\leq ce^{-c}\lvert W(s_r)-\alpha\rvert
\end{equation}
for some $c\in(\alpha-\Delta_r,\alpha+\Delta_r)$. As $ce^{-c}\leq 1$, we deduce that $\lvert s_r-\alpha e^{-\alpha}\rvert\leq \Delta_r$, which converges to $0$ uniformly.

Next, suppose that $\alpha \in [\alpha_0,\alpha_1]$ with $\alpha_0>1$. In this case, $s_r$ is bounded below by $1/e$, and we may discard lower order terms in the sum (\ref{Equation 7 Proof Maximiser of Theta}) to obtain the bound
\begin{equation}\label{Equation 7 Proof Maximiser of Theta}
sF_r'(s)\geq r^{-3/2}(es)^r
\end{equation}
If we write $s=\gamma/e$, then (\ref{Equation 7 Proof Maximiser of Theta}) exceeds $\alpha_1$ if
\begin{equation}\label{Equation 8 Proof Maximiser of Theta}
\gamma \geq \exp(\tfrac{1}{r}\log\alpha_1+\tfrac{3}{2r}\log r):=\gamma_r.
\end{equation}
Thus $\lvert s_r-1/3\rvert\leq(\gamma_r-1)/e$, which converges to $0$ uniformly.

It remains to analyse the equation $F_r(s)=\alpha\theta$, separating the argument into the same two cases as before. Suppose first that $\alpha\in[\alpha_0,\alpha_1]$ with $\alpha_1<1$. To find a limit for $\theta_r$, we find a limit for $F_r(s_r)$ by integrating the equation
\begin{equation}\label{Equation 9 Proof Maximiser of Theta}
sF_r'(s)=W(s)-\Delta_r(s).
\end{equation}
Using the identity $We^{-W}=s$ and its derivative $W'(1-W)e^{-W}=1$ with respect to $s$, we obtain the differential equation
\begin{equation}\label{Equation 10 Proof Maximiser of Theta}
F_r'(s)=W'(1-W)-\frac{\Delta_r(s)}{s},
\end{equation}
which has the solution
\begin{equation}\label{Equation 11 Proof Maximiser of Theta}
F_r(s)=W-\tfrac{1}{2}W^2-\int_0^s\tfrac{\Delta_r(t)}{t}dt.
\end{equation}
Plugging in the equations $F_r(s_r)=\alpha\theta_r$ and $W(s_r)=\alpha+\Delta_r(s_r)$ yields
\begin{equation}\label{Equation 12 Proof Maximiser of Theta}
\theta_r=1-\tfrac{1}{2}\alpha+\tfrac{1}{\alpha}\bigg(\Delta_r(s_r)-\alpha\Delta_r(s_r)-\tfrac{1}{2}\Delta_r(s_r)^2-\int_0^{s_r}\tfrac{\Delta_r(t)}{t}dt\bigg).
\end{equation}
As $\alpha$ is bounded away from $0$, $s_r$ is bounded below $1/e$ for $r$ sufficiently large, and $\Delta_r(t)/t$ converges to $0$ as $r\to\infty$, the error term in (\ref{Equation 12 Proof Maximiser of Theta}) converges to $0$ uniformly.

Finally, suppose $\alpha\in[\alpha_0,\alpha_1]$ with $\alpha_0>1$, and recall that in this case, $s_r$ converges to $e^{-1}$ uniformly. By the triangle inequality, we have
\begin{equation}
\lvert F_r(s_r)-\tfrac{1}{2}\rvert\leq\lvert F_r(s_r)-F_r(e^{-1})\rvert+\lvert F_r(e^{-1})-\tfrac{1}{2}\rvert.
\end{equation}
The second term converges to $0$ independently of $\alpha$. For the first, we apply the Mean Value Theorem to obtain
\begin{equation}
\lvert F_r(s_r)-F_r(e^{-1})\rvert\leq F_r'(c)\lvert s_r-e^{-1}\rvert,
\end{equation}
for some $c\in(e^{-1},s_r)$. Noting that $F_r'(s)$ is increasing and $F_r'(s_r)=\alpha/s_r$ is bounded, we deduce that the first term must also converge uniformly. The result follows after substituting $F_r(s_r)=\alpha\theta_r$.
\end{proof}

Note that the supremum in Lemma \ref{Lemma Maximiser of Theta} is taken over the interval $\theta\in[1/r,1]$, whereas the sums in the bounds of equations (\ref{Equation Acyclic upper bound}) and (\ref{Equation Acyclic lower bound}) are over discrete subsets of the interval $[1/r,1]$. We claim that the supremums over these two sets coincide in the limit as $n\to\infty$. Let $\theta_{r,n}=\tfrac{1}{n}\lfloor \theta_r n\rfloor$ and define $s_{r,n}$ to be the number $s$ satisfying
\begin{equation}
\Theta_r(s_{r,n},\theta_{r,n},\alpha)=\inf_{s>0}\Theta_r(s,\theta_{r,n},\alpha).
\end{equation}
It is sufficient to prove that the pair $(s_{r,n},\theta_{r,n})$ converges to the pair $(\theta_r, s_r)$ as $n\to\infty$:

\begin{lemma}\label{Lemma Convergence to continuous supremum}
Fix $\alpha>0$ and $r\geq 2$. Then
\begin{equation}
\lim_{n\to\infty}\Theta_r(s_{r,n},\theta_{r,n},\alpha)=\Theta_r(s_r,\theta_r,\alpha).
\end{equation}
Moreover, convergence is uniform for $\alpha$ in compact subsets of $(0,\infty)\setminus\{1\}$.
\end{lemma}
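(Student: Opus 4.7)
The plan is to reduce the lemma to uniform convergence of $s_{r,n}\to s_r$. The bound $|\theta_{r,n}-\theta_r|\leq 1/n$ is immediate and uniform in $\alpha$, and $\Theta_r$ is jointly continuous in its three arguments, so once $s_{r,n}\to s_r$ is established uniformly on any compact subset of $\alpha$ over which $s_r$ stays bounded away from $0$ and $\infty$, the desired convergence of $\Theta_r(s_{r,n},\theta_{r,n},\alpha)$ follows from uniform continuity of $\Theta_r$ on the resulting compact parameter region.

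To locate $s_{r,n}$, I would solve the inner first-order condition $\partial_s\Theta_r(s,\theta,\alpha)=\theta F_r'(s)/F_r(s)-1/s=0$, which reduces to $G_r(s)=1/\theta$ where, setting $c_l:=a_l/l!>0$,
\[
G_r(s):=\frac{sF_r'(s)}{F_r(s)}=\frac{\sum_{l=1}^r l\,c_l s^l}{\sum_{l=1}^r c_l s^l}.
\]
I would then verify that $G_r$ is a strictly increasing continuous bijection from $(0,\infty)$ onto $(1,r)$: its value is the mean of $\{1,\ldots,r\}$ under the probability weights $P_s(l)\propto c_l s^l$, and a direct computation gives $G_r'(s)=\mathrm{Var}_{P_s}(L)/s>0$, while the boundary values $1$ and $r$ are read off from the lowest- and highest-order terms of $F_r$. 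This identifies $s_r=G_r^{-1}(1/\theta_r)$ and $s_{r,n}=G_r^{-1}(1/\theta_{r,n})$, and incidentally confirms uniqueness of the inner infimum implicit in the statement.

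For uniformity in $\alpha$, I would observe that $sF_r'(s)$ is itself a strictly increasing continuous bijection $(0,\infty)\to(0,\infty)$, so the defining relation $s_rF_r'(s_r)=\alpha$ from Lemma \ref{Lemma Maximiser of Theta} renders $s_r$, and hence $\theta_r=F_r(s_r)/\alpha$, continuous in $\alpha$. Since Lemma \ref{Lemma Maximiser of Theta} gives $\theta_r\in(1/r,1)$ for every $\alpha>0$, for any compact $K\subset(0,\infty)\setminus\{1\}$ the image $\{\theta_r(\alpha):\alpha\in K\}$ is contained in some $[\theta_{\min},\theta_{\max}]\subset(1/r,1)$, and for $n$ sufficiently large $\theta_{r,n}$ lies in the same sub-interval up to a correction of order $1/n$. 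Uniform continuity of $G_r^{-1}$ on the corresponding compact sub-interval of $(1,r)$ then yields $\sup_{\alpha\in K}|s_{r,n}-s_r|\to 0$, after which uniform continuity of $\Theta_r$ on the compact parameter region concludes the argument.

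The main technical ingredient is the strict monotonicity of $G_r$ — encoded as positivity of the variance of an exponential-family sufficient statistic — since this underpins both the uniqueness of the inner minimizer and the ability to track $s_{r,n}$ continuously through $\theta_{r,n}$. Given that, the uniformity claim in $\alpha$ reduces to a routine compactness argument. An alternative route via the implicit function theorem at $(s_r,\theta_r)$ would require checking non-degeneracy of $\partial_s^2\Theta_r$, which ultimately reduces to the same monotonicity fact.
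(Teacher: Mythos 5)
Your argument is correct and its overall skeleton is the same as the paper's: reduce everything to monotone dependence of the inner minimiser $s$ on $\theta$, so that $\theta\mapsto s$ is uniformly continuous on compacts, and then conclude by continuity of $\Theta_r$ on a compact box. The genuine difference lies in how the monotonicity is established. The paper isolates it as Proposition \ref{Proposition Polynomial Quotient}, showing that $Q(s)=F_r(s)/\bigl(sF_r'(s)\bigr)$ decreases by differentiating, expanding $x^3P'(x)^2Q'(x)$ as a double sum of monomials, symmetrising in the two summation indices, and arriving at $-\tfrac12\sum(s-t)^2a_sa_tx^{s+t}\le 0$ --- an explicit sum-of-squares computation. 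You prove the equivalent fact that $G_r(s)=1/Q(s)$ increases by viewing it as $\mathbb{E}_{P_s}[L]$ for the one-parameter exponential family $P_s(l)\propto (a_l/l!)s^l$ on $\{1,\dots,r\}$ and computing $G_r'(s)=\mathrm{Var}_{P_s}(L)/s>0$. Unwinding the standard identity $\mathrm{Var}(L)=\tfrac12\,\mathbb{E}[(L-L')^2]$ shows this is exactly the paper's sum-of-squares expansion in probabilistic clothing, but your version is shorter, it names the mechanism forcing the sign, and it delivers uniqueness of the inner minimiser at no extra cost. Your remaining steps --- continuity of $\alpha\mapsto(s_r,\theta_r)$ via the increasing bijection $s\mapsto sF_r'(s)$, compactness to confine $\theta_r(\alpha)$ to a closed subinterval of $(1/r,1)$, and a single appeal to joint uniform continuity of $\Theta_r$ --- repackage what the paper does term by term in its enumerated items and carry the same content.
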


To prove Lemma \ref{Lemma Convergence to continuous supremum}, we will need the following proposition:

\begin{proposition}\label{Proposition Polynomial Quotient}
Let $P(x)=\sum_{r=0}^na_rx^r$ be a polynomial with non-negative co-efficients, and consider the function
\begin{equation}
Q(x)=\frac{P(x)}{xP'(x)}.
\end{equation}
Then for $x>0$, $Q(x)$ is decreasing.
\end{proposition}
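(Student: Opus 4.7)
The plan is to prove the equivalent statement that $1/Q(x) = xP'(x)/P(x)$ is non-decreasing on $(0,\infty)$. The cleanest route I see is probabilistic. For each $x > 0$, assuming $P$ is not constant, define a tilted distribution on $\{0,1,\dots,n\}$ by $\Pr[R_x = r] = a_r x^r / P(x)$; this is a bona fide probability measure because all $a_r$ are non-negative. A direct computation gives $\mathbb{E}[R_x] = xP'(x)/P(x) = 1/Q(x)$.

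Next I would differentiate. Using the identities $xP'(x) = P(x)\mathbb{E}[R_x]$ and $x(P'(x) + xP''(x)) = \sum_r r^2 a_r x^r = P(x)\mathbb{E}[R_x^2]$, the quotient rule yields
\[
\frac{d}{dx}\left(\frac{xP'(x)}{P(x)}\right) = \frac{P(x)(P'(x) + xP''(x)) - xP'(x)^2}{P(x)^2} = \frac{\mathrm{Var}(R_x)}{x}.
\]
Since variance is non-negative and $x > 0$, this derivative is $\geq 0$, so $1/Q$ is non-decreasing and $Q$ is non-increasing, as claimed.

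One can also bypass probability entirely by applying the Cauchy--Schwarz inequality to the sequences $(\sqrt{a_r x^r})_r$ and $(r\sqrt{a_r x^r})_r$; this immediately gives $(xP'(x))^2 \leq P(x) \cdot x(P'(x) + xP''(x))$, which after dividing by $x$ is precisely the non-negativity of the numerator appearing above. I do not anticipate any real obstacle: both routes reduce the claim to a single convex inequality, and the only minor caveat is to note that $Q$ is undefined when $P$ is a non-zero constant, in which case the statement is vacuous.
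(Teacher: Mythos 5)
Your proof is correct, and the reduction is the same as the paper's: both arguments come down to showing that the numerator $P(x)\big(P'(x)+xP''(x)\big)-xP'(x)^2$ (equivalently its negative, depending on whether one differentiates $Q$ or $1/Q$) has a fixed sign. Where you differ is in how that inequality is established. The paper expands the numerator as a double sum
\[
x\Big(xP'(x)^2-P(x)P'(x)-xP(x)P''(x)\Big)=\sum_{s,t}\big[st-t-t(t-1)\big]a_sa_tx^{s+t},
\]
symmetrizes in $s\leftrightarrow t$, groups by $m=s+t$, and recognizes the coefficient as $-\tfrac12(m-2r)^2a_ra_{m-r}\le 0$, a completing-the-square argument done by hand. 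You instead observe that $1/Q(x)$ is the mean of the log-concavity tilt $\Pr[R_x=r]\propto a_rx^r$ and that its derivative is $\mathrm{Var}(R_x)/x\ge 0$ (or, equivalently, apply Cauchy--Schwarz to $(\sqrt{a_rx^r})_r$ and $(r\sqrt{a_rx^r})_r$). These are the same inequality in disguise --- the variance/Cauchy--Schwarz nonnegativity is precisely the symmetrized completed square --- but your framing is more conceptual and makes the proof essentially one line, whereas the paper's is self-contained but requires the reader to track the index bookkeeping. Your caveat about $P$ constant is appropriate; in the degenerate monomial case $P=a_kx^k$ both arguments correctly give $Q$ constant, so ``decreasing'' is meant weakly throughout, consistent with the paper.
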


\begin{proof}
Differentiation yields
\begin{equation}
Q'(x)=\frac{1}{x^2P'(x)^2}\bigg(xP'(x)^2-P(x)P'(x)-xP(x)P''(x)\bigg).
\end{equation}
It will be sufficient to show that
\begin{equation}
x^3P'(x)^2Q'(x)=\sum_{s=0}^n\sum_{t=0}^n[sta_sa_t-ta_sa_t-t(t-1)a_sa_t]x^{s+t},
\end{equation}
is negative. Writing $m=s+t$ (and noting that the sum is the same when interchanging the order of summation of $s$ and $t$) this may be rewritten as
\begin{align}
x^3P'(x)^2Q(x)&=\tfrac{1}{2}\sum_{m=0}^{2n}\sum_{r=0}^n[2r(m-r)-m-r(r-1)-(m-r)(m-r-1)]a_ra_{m-r}x^m\nonumber\\
&=-\tfrac{1}{2}\sum_{m=0}^{2n}\sum_{r=0}^n(m-2r)^2a_ra_{m-r}x^m
\end{align}
where we have set $a_l=0$ for any $l<0$. This is negative, as required.
\end{proof}

\begin{proof}[Proof of Lemma \ref{Lemma Convergence to continuous supremum}]
Recall that
\begin{equation}\label{Equation 1 Proof Convergence to continuous supremum}
\Theta_r(s,\theta,\alpha)=-\theta\log\alpha-\theta\log\theta+\theta+\theta\log F_r(s)-\log s.
\end{equation}
Using the triangle inequality, we may bound the difference between $\Theta_r(s_{r,n},\theta_{r,n},\alpha)$ and $\Theta_r(s_r,\theta_r,\alpha)$ by the sum of the differences of the terms in (\ref{Equation 1 Proof Convergence to continuous supremum}). If we can show that each of these differences converges to $0$, we will be done:
\begin{enumerate}
\item First, consider the term $\lvert 1-\log\alpha\rvert \lvert \theta_r-\theta_{r,n}\rvert$, and note that  $\lvert \theta_r-\theta_{r,n}\rvert\leq 1/n$ by definition. As the function $1-\log\alpha$ is uniformly bounded on compact subsets of $\alpha\in(0,\infty)\setminus\{1\}$, this term converges uniformly to $0$ as $n\to\infty$.
\item Next, consider the term $\lvert \theta_r\log\theta_r-\theta_{r,n}\log\theta_{r,n}\rvert$. By the triangle inequality, we have
\begin{equation}\label{Equation 2 Proof Convergence to continuous supremum}
\lvert \theta_r\log\theta_r-\theta_{r,n}\log\theta_{r,n}\rvert\leq\lvert \theta_r(\log\theta_r-\log\theta_{r,n})\rvert+\lvert (\theta_r-\theta_{r,n})\log\theta_{r,n}\rvert,
\end{equation}
where both of the terms in the right hand side converge uniformly to $0$ by uniform continuity of the logarithm away from $0$.
\item Next, consider the term $\lvert\log s_r - \log s_{r,n}\rvert$. Recall that, at a stationary point, $\theta$ is given by
\begin{equation}
\theta=\frac{F_r(s)}{sF_r'(s)}.
\end{equation}
By Proposition \ref{Proposition Polynomial Quotient}, $\theta$ is decreasing when viewed as a function of $s$. Fix $a<b$ such that, on our chosen compact subset of $\alpha\in(0,\infty)\setminus\{1\}$,
\begin{equation}
\theta(b)<\theta_{r,n}<\theta_r<\theta(a).
\end{equation}
On $[a,b]$, $\theta(s)$ is continuous and injective. In particular, it has a uniformly continuous inverse $s=h(\theta)$ on the compact set $[\theta(b),\theta(a)]$, and so $s_r-s_{r,n}$ converges uniformly to $0$. As the logarithm is uniformly continuous on intervals bounded away from $0$, the same holds for $\log s_r-\log s_{r,n}$.
\item Finally, consider the term $\lvert\theta_r\log F_r(s_r)-\theta_{r,n}\log F_r(s_{r,n})\rvert$. By the triangle inequality, we have
\begin{align}
\lvert\theta_r\log F_r(s_r)-\theta_{r,n}\log F_r(s_{r,n})\rvert\leq&\lvert\theta_r\log F_r(s_r)-\theta_{r}\log F_r(s_{r,n})\rvert\nonumber\\
&+\lvert\theta_r\log F_r(s_{r,n})-\theta_{r,n}\log F_r(s_{r,n})\rvert.
\end{align}
The second term converges to $0$ as $n\to\infty$ by continuity of $F_r$. For the first term, we observe that
\begin{equation}
\log F_r(s_{r,n})-\log F_r(s_r)=\int_{s_r}^{s_{r,n}}\frac{F_r'(t)}{F_r(t)}dt \leq r\int_{s_r}^{s_{r,n}}t \ dt \leq rs_{r,n}(s_{r,n}-s_r).
\end{equation}
For $n$ sufficiently large, $s_{r,n}$ is uniformly bounded and so this converges to uniformly.
\end{enumerate}
\end{proof}

We have established that $\theta_r$ converges to a limit uniformly for $\lambda$ belonging to compact subsets of $(0,\infty)\setminus\{q\}$. Moreover, this limit belongs to a compact subset of $(0,1)$. In particular, one may choose $\eta>0$ sufficiently small and $r>0$, $n>0$ sufficiently large to ensure that $\theta_{r,n}n$ belongs to the interval $[\tfrac{1}{r}(1+\eta)n, (1-\eta)n]$. Then, we may apply Proposition \ref{Proposition Large nk behaviour of Q} to the bounds of equations  (\ref{Equation Acyclic upper bound}) and (\ref{Equation Acyclic lower bound}) to obtain
\begin{equation}\label{Equation Penultimate Z equality}
Z_{n,\lambda,q}[L\cap B_r]=e^{o(n)}\exp\{n(-1-\tfrac{1}{2}\lambda+\log\lambda+\Theta_r(s_{r,n},\theta_{r,n},\lambda/q))\},
\end{equation}
where the $o(n)$ term is bounded uniformly for $\lambda$ belonging to compact subsets of $(0,\infty)\setminus\{q\}$.

\begin{proof}[Proof of Theorem \ref{Theorem Weighted rate function for acyclic graphs}]
From (\ref{Equation Penultimate Z equality}), we have that
\begin{align}\label{Equation 1 Proof Weighted rate function for acyclic graphs}
\frac{1}{n}\log Z_{n,\lambda,q}[L\cap B_r] &=\Theta_r(s_{r,n},\theta_{r,n},\lambda/q)-\Theta_r(s_r,\theta_r,\lambda/q)\nonumber\\
&+\Theta_r(s_r,\theta_r,\lambda/q)-1-\tfrac{1}{2}\lambda+\log\lambda\nonumber\\
&+\tfrac{o(n)}{n}.
\end{align}
Take the limit of (\ref{Equation 1 Proof Weighted rate function for acyclic graphs}) as $n\to\infty$ and $r\to\infty$ in that order. The first line converges to $0$ by Lemma \ref{Lemma Convergence to continuous supremum}. The second line converges to the required rate by Lemma \ref{Lemma Maximiser of Theta}.
\end{proof}

Next, we prove Theorem \ref{Theorem Acyclic equals small components}. We need one preliminary lemma, corresponding to (\cite{BCS2007}, Lemma $5.1$):

\begin{lemma}\label{Lemma Small components and acyclic graphs}
Fix $q>0$ and $\lambda>0$. Then
\begin{equation}\label{Equation Lemma Small components and acyclic graphs}
Z_{n,\lambda,q}[B_r]\leq Z_{n,\lambda,q}[L](1-\tfrac{\lambda}{n})^{-\tfrac{1}{2}rn}.
\end{equation}
\end{lemma}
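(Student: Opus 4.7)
The plan is to compare $Z_{n,\lambda,q}[B_r]$ to $Z_{n,\lambda,q}[L\cap B_r]$ component-by-component, using the observation that every connected graph contains at least one spanning tree. Since $L \cap B_r \subseteq L$, the bound $Z_{n,\lambda,q}[L\cap B_r]\le Z_{n,\lambda,q}[L]$ is free, so everything reduces to controlling the ratio $Z_{n,\lambda,q}[B_r]/Z_{n,\lambda,q}[L\cap B_r]$.

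First, I would decompose the partition function by first summing over ordered partitions $\mathcal{P}=\{C_1,\ldots,C_k\}$ of the vertex set into blocks of size at most $r$, then over connected graphs $H_i$ on each $C_i$. Writing $p=\lambda/n$, this gives
\begin{equation*}
Z_{n,\lambda,q}[B_r]=(1-p)^{\binom{n}{2}}\sum_{\mathcal{P}}q^{k}\prod_i\sum_{H_i\text{ connected on }C_i}\left(\tfrac{p}{1-p}\right)^{\abs{E(H_i)}},
\end{equation*}
and the same expression for $Z_{n,\lambda,q}[L\cap B_r]$ but with the inner sums restricted to spanning trees (so each $H_i$ contributes exactly $(p/(1-p))^{\abs{C_i}-1}$ times the number of spanning trees on $C_i$).

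The key step is the inner estimate. For a component $C_i$ of size $l$, every connected graph $H$ on $C_i$ has at least one spanning tree $T\subseteq H$, so
\begin{equation*}
\sum_{H\text{ connected}}\!\!\left(\tfrac{p}{1-p}\right)^{\abs{E(H)}}\!\!\le\sum_{T}\sum_{H\supseteq T}\!\!\left(\tfrac{p}{1-p}\right)^{\abs{E(H)}}=\sum_{T}\left(\tfrac{p}{1-p}\right)^{l-1}\!\!\left(1+\tfrac{p}{1-p}\right)^{\binom{l}{2}-(l-1)},
\end{equation*}
where the inner sum on the right collapses via the binomial theorem to $(1-p)^{-(\binom{l}{2}-(l-1))}$. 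Plugging this into the component decomposition and rearranging yields
\begin{equation*}
Z_{n,\lambda,q}[B_r]\le\sup_{\mathcal{P}}(1-p)^{-\sum_i\bigl[\binom{l_i}{2}-(l_i-1)\bigr]}\cdot Z_{n,\lambda,q}[L\cap B_r].
\end{equation*}

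To finish, I would bound the exponent using $l_i\le r$ and $\sum_i l_i=n$:
\begin{equation*}
\sum_i\left[\binom{l_i}{2}-(l_i-1)\right]\le\sum_i\binom{l_i}{2}\le\tfrac{r-1}{2}\sum_i l_i=\tfrac{(r-1)n}{2}\le \tfrac{rn}{2}.
\end{equation*}
Combined with $Z_{n,\lambda,q}[L\cap B_r]\le Z_{n,\lambda,q}[L]$, this gives the claim. I do not expect a serious obstacle: the main (minor) subtlety is ensuring the spanning-tree inequality goes in the right direction, which works because the weights are non-negative and $\tau(H)\ge 1$ for every connected $H$, so $\sum_H f(H)\le\sum_H \tau(H)f(H)=\sum_T\sum_{H\supseteq T}f(H)$.
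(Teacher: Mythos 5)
Your proof is correct and follows essentially the same route as the paper: both decompose over the partition of the vertex set into connected components, use the fact that every connected graph contains a spanning tree to bound the connected-graph weight by $(1-p)^{-(\binom{l}{2}-l+1)}$ times the tree weight on each block, and then sum the exponents using $l_i\le r$ and $\sum_i l_i=n$. The paper phrases this via conditional probabilities $\phi[\cdot\mid\{S_j\}]$ and an intermediate event $L_r$ (each component is acyclic or small) rather than your direct partition-function decomposition comparing $Z[B_r]$ with $Z[L\cap B_r]$, but the underlying argument and the resulting exponent are identical.
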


\begin{proof}
We follow the proof of (\cite{BCS2007}, Lemma $5.1$), showing that
\begin{equation}\label{Equation 1 Small components and acyclic graphs}
\phi_{n,\lambda,q}[B_r]\leq \phi_{n,\lambda,q}[L](1-\tfrac{\lambda}{n})^{-\tfrac{1}{2}rn}.
\end{equation}
Given a set of vertices $S\subset \{1,\cdots,n\}$, let $C_S$ denote the restriction of the graph $K_n(\omega)$ to $S$, and let $T$ be a tree on $S$. Conditionally on the event $C_S \supset T$, all vertices of $S$ belong to the same component of $K_n(\omega)$. In particular, any edge in $E(S)\setminus T$ is open independently with probability $\lambda/n$, and so
\begin{equation}\label{Equation 2 Small components and acyclic graphs}
\frac{\phi_{n,\lambda,q}[C_S=T]}{\phi_{n, \lambda,q}[C_S\supset T]}=(1-\tfrac{\lambda}{n})^{{|S|\choose 2}-|S|+1}\geq (1-\tfrac{\lambda}{n})^{\tfrac{1}{2}|S|^2}.
\end{equation}
Let $K_S$ be the event that $C_S$ is connected. Then
\begin{equation}\label{Equation 3 Small components and acyclic graphs}
\phi_{n,\lambda,q}[K_S]\leq \sum_T\phi_{n,\lambda,q}[C_S\supset T]\leq  (1-\tfrac{\lambda}{n})^{-\tfrac{1}{2}|S|^2}\phi_{n,\lambda,q}[C_S\text{ is a tree}].
\end{equation}
Now, let $L_r$ denote the event that each component of $K_n(\omega)$ is either acyclic or has size at most $r$, and note that $B_r\subset L_r$. Let $\{S_j\}$ be a partition of $\{1,\cdots,n\}$ and let $\phi_{n,\lambda,q}[\{S_j\}]$ denote the probability that $\{S_j\}$ are the connected components of $K_n(\omega)$. Conditioning the event $L_r$ on the partition $\{S_j\}$ of connected components, we have
\begin{equation}\label{Equation 4 Small components and acyclic graphs}
\phi_{n,\lambda,q}[L_r]=\sum_{\{S_j\}}\phi_{n,\lambda,q}[\{S_j\}]\phi_{n,\lambda,q}[L_r\mid\{S_j\}].
\end{equation}
Moreover, conditionally on the partition $\{S_j\}$ of connected components, the states of edges in different components are independent. In particular, we may write

\begin{align*}
\phi_{n,\lambda,q}[L_r|\{S_j\}]&=\prod_{j:\lvert S_j\rvert >r}\phi_{n,\lambda,q}[C_{S_j}\text{ is a tree}\mid K_{S_j}]\\
&=\prod_j\phi_{n,\lambda,q}[C_{S_j}\text{ is a tree}\mid K_{S_j}]\prod_{j:\lvert S_j\rvert\leq r}\phi_{n,\lambda,q}[C_{S_j}\text{ is a tree}\mid K_{S_j}]^{-1}\\
&\leq \prod_j\phi_{n,\lambda,q}[C_{S_j}\text{ is a tree}\mid K_{S_j}]\prod_{j:|S_j|\leq r}(1-\tfrac{\lambda}{n})^{-\tfrac{1}{2}\lvert S_j\rvert^2}
\end{align*}
where the inequality in the third line is a consequence of (\ref{Equation 2 Small components and acyclic graphs}). As $\lvert S_j\rvert<r$ for every term in the second product and the sum over $\lvert S_j\rvert$ is at most $n$, we obtain
\begin{equation}
\phi_{n,\lambda,q}[L_r\mid\{S_j\}]\leq\phi_{n,\lambda,q}[L\mid\{S_j\}](1-\tfrac{\lambda}{n})^{-\tfrac{1}{2}rn}.
\end{equation}
Finally, we see that
\begin{align*}
\phi_{n,\lambda,q}[B_r]&\leq\phi_{n,\lambda,q}[L_r]\\
&=\sum_{\{S_j\}}\phi_{n, \lambda,q}[\{S_j\}]\phi_{n,\lambda,q}[L_r\mid\{S_j\}]\\
&\leq\sum_{\{S_j\}}\phi_{n,\lambda,q}[\{S_j\}]\phi_{n,\lambda,q}[L\mid\{S_j\}](1-\tfrac{\lambda}{n})^{-\tfrac{1}{2}rn}\\
&=\phi_{n,\lambda,q}[L](1-\tfrac{\lambda}{n})^{-\tfrac{1}{2}rn}
\end{align*}
which establishes (\ref{Equation 1 Small components and acyclic graphs}). We obtain (\ref{Equation Lemma Small components and acyclic graphs}) by multiplying both sides of  (\ref{Equation 1 Small components and acyclic graphs}) by the partition function.
\end{proof}

\begin{proof}[Proof of Theorem \ref{Theorem Acyclic equals small components}]
The theorem is a consequence of the following inequalities:
\begin{equation}\label{Equation Four inequalities}
\begin{aligned}
\lim_{n\to\infty}\frac{1}{n}\log Z_{n,\lambda,q}[L]&\leq\lim_{r\to\infty}\liminf_{n\to\infty}\frac{1}{n}\log Z_{n,\lambda,q}[B_r],\\
\lim_{r\to\infty}\liminf_{n\to\infty}\frac{1}{n}\log Z_{n,\lambda,q}[B_r]&\leq\lim_{\epsilon\downarrow 0}\liminf_{n\to\infty}\frac{1}{n}\log Z_{n,\lambda,q}[B_{\epsilon n}],\\
\lim_{r\to\infty}\limsup_{n\to\infty}\frac{1}{n}\log Z_{n,\lambda,q}[B_r]&\leq\lim_{\epsilon\downarrow 0}\limsup_{n\to\infty}\frac{1}{n}\log Z_{n,\lambda,q}[B_{\epsilon n}],\\
\lim_{\epsilon\downarrow 0}\limsup_{n\to\infty}\frac{1}{n}\log Z_{n,\lambda,q}[B_{\epsilon n}] &\leq \lim_{n\to\infty}\frac{1}{n}\log Z_{n,\lambda,q}[L].
\end{aligned}
\end{equation}
To prove the first inequality, we apply the inclusion $B_r \supset B_r\cap L$ and Theorem \ref{Theorem Weighted rate function for acyclic graphs} to see that
\begin{equation}
\liminf_{n\to\infty}\frac{1}{n}\log Z_{n,\lambda,q}[B_r]\geq\liminf_{n\to\infty}\frac{1}{n}Z_{n,\lambda,q}[B_r\cap L]\to\lim_{n\to\infty}\frac{1}{n}\log Z_{n,\lambda,q}[L]
\end{equation}
as $r\to\infty$. To prove the second inequality, fix $r\geq 2,\ \epsilon > 0$, and let $N=\lceil r/\epsilon \rceil$. Then, for every $n\geq N$, we have $\epsilon n \geq \epsilon\lceil r/\epsilon \rceil \geq r.$ As a result, $B_{\epsilon n} \supset B_r$, and
\begin{equation}
\frac{1}{n}\log Z_{n,\lambda,q}[B_r]\leq \frac{1}{n}\log Z_{n,\lambda,q}[B_{\epsilon n}].
\end{equation}
One may take the infimum over $m\geq n$, followed by the limit as $n\to\infty$, on both sides to obtain
\begin{equation}
\liminf_{n\to\infty}\frac{1}{n}\log Z_{n,\lambda,q}[B_r]\leq\liminf_{n\to\infty}\frac{1}{n}\log Z_{n,\lambda,q}[B_{\epsilon n}].
\end{equation}
As $r$ and $\epsilon$ were arbitrary, taking the limits $r\to\infty$ and $\epsilon\downarrow 0$ yields the second inequality. The proof of the third inequality is similar. To prove the fourth inequality, we apply Lemma \ref{Lemma Small components and acyclic graphs} to obtain
\begin{equation}
\frac{1}{n}\log Z_{n,\lambda,q}[B_{\epsilon n}] \leq \frac{1}{n}\log Z_{n,\lambda,q}[L]+\tfrac{1}{2}\lambda\epsilon,
\end{equation}
from which the inequality follows after taking the limit superior as $n\to\infty$ and the limit as $\epsilon\downarrow 0$.
\end{proof}

\section{Uniqueness and the largest component}

In this section, we prove Theorem \ref{Theorem Weighted rate function for size of largest component} and Lemma \ref{Lemma Uniqueness of large components}. The proof of the latter is identical to that of (\cite{BCS2007}, Lemma $6.2$), and requires only the following analogue of (\cite{BCS2007}, Lemma $6.1$), which estimates the probability of the event $K_{\epsilon,2}$ that $K_n(\omega)$ is connected or has exactly two connected components each of size at least $\epsilon n$:

\begin{lemma}[\cite{BCS2007}, Lemma $6.1$]\label{Lemma Uniqueness prerequisite}
Fix $q>0$. Then for all $\lambda_0>0$ and $\epsilon_0>0$ there exists $c_1=c_1(\lambda_0,\epsilon_0)>0$ such that for all $\epsilon\geq\epsilon_0$ and $\lambda\leq\lambda_0$, we have
\begin{equation}
\limsup_{n\to\infty}\frac{1}{n}\log\phi_{n,\lambda,q}[K^c|K_{\epsilon,2}]<-c_1.
\end{equation}
\end{lemma}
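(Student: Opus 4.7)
The plan is to adapt the BCS argument by bounding the conditional probability by a ratio of weighted partition functions, estimating each via Theorem~\ref{Theorem Weighted rate function for connectedness}, and verifying that the resulting exponential rate is strictly negative. Since $K \subset K_{\epsilon,2}$,
\[\phi_{n,\lambda,q}[K^c \mid K_{\epsilon,2}] \leq \frac{\phi_{n,\lambda,q}[K^c \cap K_{\epsilon,2}]}{\phi_{n,\lambda,q}[K]} = \frac{Z_{n,\lambda,q}[K^c \cap K_{\epsilon,2}]}{Z_{n,\lambda,q}[K]}.\]
Theorem~\ref{Theorem Weighted rate function for connectedness} handles the denominator: $\tfrac{1}{n}\log Z_{n,\lambda,q}[K] \to \log\pi_1(\lambda)$, uniformly on compact subsets of $[0,\infty)$.

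For the numerator, I would decompose over the two-component partition $\{A_1, A_2\}$ of the vertex set, writing $|A_i| = n_i$ with $n_1+n_2 = n$ and $n_i \geq \epsilon n$. The random cluster weight factorizes across components because $q^{k(\omega)} = q^2$ splits multiplicatively, and the restriction of edge weight $\lambda/n$ to $K_{n_i}$ matches the random cluster edge weight $(\lambda n_i/n)/n_i$ on $K_{n_i}$. Hence
\[Z_{n,\lambda,q}[K^c \cap K_{\epsilon, 2}] = \tfrac{1}{2}\sum_{\substack{n_1+n_2=n\\ n_i\geq \epsilon n}}\binom{n}{n_1}\bigl(1-\tfrac{\lambda}{n}\bigr)^{n_1 n_2}\, Z_{n_1, \lambda n_1/n, q}[K]\, Z_{n_2, \lambda n_2/n, q}[K].\]
Applying Theorem~\ref{Theorem Weighted rate function for connectedness} uniformly to each $Z$-factor (whose effective parameter lies in $[0,\lambda]$), Stirling's formula to $\binom{n}{n_1}$, and $(1-\lambda/n)^{n_1 n_2} = e^{-\lambda n_1 n_2/n + o(n)}$, the summand at $n_1 = tn$ exponentiates to $e^{n f_2(t) + o(n)}$ where
\[f_2(t) := -S(t) + t\log \pi_1(\lambda t) + (1-t)\log \pi_1(\lambda(1-t)) - \lambda t(1-t).\]
Since there are only $O(n)$ summands, this yields $\limsup_n \tfrac{1}{n}\log Z_{n,\lambda,q}[K^c \cap K_{\epsilon, 2}] \leq \sup_{t\in[\epsilon, 1-\epsilon]} f_2(t)$.

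It remains to show $g(t) := f_2(t) - \log \pi_1(\lambda) < 0$ strictly for $t \in (0,1)$; continuity on $[\epsilon_0, 1-\epsilon_0]$ then produces the required $c_1 > 0$. Note $g(0) = g(1) = 0$. Substituting $u = \lambda t$, $v = \lambda(1-t)$ (so $u+v=\lambda$) and multiplying by $\lambda$, a direct computation reduces the inequality to $G(u,v) < 0$ for all $u,v > 0$, where
\[G(u,v) := u\log h(u) + v\log h(v) - (u+v)\log h(u+v) - uv, \qquad h(x) := \pi_1(x)/x.\]
Applying the Mean Value Theorem to $\log h$ on $[u, u+v]$ and $[v, u+v]$ gives $G(u,v) = -uv\bigl[(\log h)'(\xi_1) + (\log h)'(\xi_2) + 1\bigr]$ for some $\xi_1 \in (u, u+v)$ and $\xi_2 \in (v, u+v)$.

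The main obstacle is then the elementary inequality $(\log h)'(x) > -\tfrac{1}{2}$ for all $x > 0$. I would verify this via the explicit formulas $(\log h)'(x) = 1/(e^x-1) - 1/x$ (with limiting value $-\tfrac{1}{2}$ at $0$) and $(\log h)''(x) = 1/x^2 - e^x/(e^x-1)^2$, the latter being positive iff $e^x - 1 > xe^{x/2}$, equivalently $2\sinh(x/2) > x$, which is standard. This shows $\log h$ is strictly convex with limiting slope $-\tfrac{1}{2}$ at the origin, so $(\log h)'(\xi_1) + (\log h)'(\xi_2) > -1$, the bracketed factor is strictly positive, and $G(u,v) < 0$, completing the argument.
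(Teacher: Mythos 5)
Your argument is correct, and the reduction is the same as the paper's up to the final negativity estimate. You bound $\phi_{n,\lambda,q}[K^c\mid K_{\epsilon,2}]$ by the ratio $\phi_{n,\lambda,q}[K^c\cap K_{\epsilon,2}]/\phi_{n,\lambda,q}[K]$, decompose the numerator over two-component partitions (factorisation of $q^{k(\omega)}=q^2$ is exactly why this works for $q\ne 1$), and after applying Theorem~\ref{Theorem Weighted rate function for connectedness} and Stirling you arrive at the exponent $f_2(t)$, which is precisely the function $\Xi(\theta)$ of the paper's proof. From there the two arguments diverge. The paper invokes (citing \cite{BCS2007}) that $\Xi$ is convex and symmetric about $1/2$, so the maximum over $[\epsilon,1-\epsilon]$ sits at the endpoints and lies strictly below $\Xi(0)=\log\pi_1(\lambda)$. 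You instead prove $f_2(t)<f_2(0)$ for $t\in(0,1)$ directly: the substitution $u=\lambda t$, $v=\lambda(1-t)$ (which indeed turns $\lambda(f_2(t)-f_2(0))$ into $G(u,v)$ after the $-t\log t+t\log u$ cancellations), two applications of the Mean Value Theorem to $\log h$ with $h(x)=\pi_1(x)/x$, and the elementary bound $(\log h)'(x)>-\tfrac12$, which follows from $(\log h)'(x)=\tfrac{1}{e^x-1}-\tfrac{1}{x}\to-\tfrac12$ as $x\to0$ together with $(\log h)''>0$, i.e.\ $2\sinh(x/2)>x$. This is self-contained, avoids the citation of the convexity of $\Xi$, and is a nice alternative. (In effect you show strict midpoint-type concavity of $x\mapsto x\log h(x)$ with a defect of exactly $uv$, which is what the extra $-\lambda t(1-t)$ absorbs.)

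One small caveat applies equally to both proofs and really to the statement of the lemma: your concluding sentence appeals to continuity on $[\epsilon_0,1-\epsilon_0]$ to extract $c_1>0$, but the lemma as stated asks for a single $c_1$ valid for all $\lambda\le\lambda_0$. From $G(u,v)=-uv[(\log h)'(\xi_1)+(\log h)'(\xi_2)+1]$ one sees $g(t)=-\lambda t(1-t)[(\log h)'(\xi_1)+(\log h)'(\xi_2)+1]$, and since both derivative terms tend to $-\tfrac12$ as $\lambda\to0$, the gap vanishes as $\lambda\to0$. So a uniform $c_1$ requires also bounding $\lambda$ away from $0$. This is how the lemma is used downstream (in Lemma~\ref{Lemma Uniqueness of large components} the effective $\lambda$ is $\lambda\theta\ge\lambda\epsilon$, which is bounded below), and the paper's own write-up is equally silent on this point, so it is not a defect of your argument relative to the paper's; but if you wanted the proof to match the literal quantifiers of the statement you would need to add that restriction.
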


\begin{proof}
Observe that
\begin{equation}\label{Equation 1 Lemma Uniqueness prerequisite}
\phi_{n,\lambda,q}[K^c|K_{\epsilon,2}]=\frac{\phi_{n,\lambda,q}[K_{\epsilon,2}\setminus K]}{\phi_{n,\lambda,q}[K_{\epsilon,2}\setminus K]+\phi_{n,\lambda,q}[K]} \leq \frac{\phi_{n,\lambda,q}[K_{\epsilon,2}\setminus K]}{\phi_{n,\lambda,q}[K]}.
\end{equation}
It suffices to show that the ratio on the right hand side of (\ref{Equation 1 Lemma Uniqueness prerequisite}) decays to zero exponentially in $n$, with a rate that is uniformly bounded in $\epsilon\geq\epsilon_0$ and $\lambda\leq\lambda_0$. Observe that $\omega \in K_{\epsilon,2}\setminus K$ if and only if we may find a set $A\subset K_n$ (where $A$ depends on $\omega$) of vertices of size between $\epsilon n$ and $n-\epsilon n$ such that $A$, $A^c$ are connected components of $K_n(\omega)$ and there are no open edges between them. We count the configurations satisfying these conditions. Let $E(A,A^c)$ be the set of edges between $A$ and its complement in $K_n(\omega)$, and suppose that $\vert A\rvert=k$. It can be seen that $A$ is disconnected from $A^c$ in $K_n(\omega)$ with probability
\begin{equation}
\phi_{n,\lambda,q}[E(A,A^c)=\emptyset]=\frac{Z_{k,\lambda k/n,q}Z_{n-k, \lambda(1-k/n),q}}{Z_{n,\lambda,q}}(1-\lambda/n)^{k(n-k)}.
\end{equation}
Conditionally on the above event, $A$ is connected in $K_n(\omega)$ with probability
\begin{equation}
\phi_{k,\lambda k/n,q}[K]=Z_{k,\lambda k/n,q}[K]/Z_{k,\lambda k/n,q},
\end{equation}
and $A^c$ is connected in $K_n(\omega)$ with probability
\begin{equation}
\phi_{n-k, \lambda(1-k/n),q}[K]=Z_{n-k, \lambda(1-k/n),q}[K]/Z_{n-k, \lambda(1-k/n),q}.
\end{equation}
Note that there are ${n\choose k}$ choices for $A$, and that we have counted any pair $(A,A^c)$ twice. Thus, we have the equation
$$
Z_{n,\lambda,q}[K_{\epsilon,2}\setminus K]=\frac{1}{2}\sum_{\epsilon n\leq k \leq n-\epsilon n}{n\choose k}\bigg(1-\frac{\lambda}{n}\bigg)^{k(n-k)}Z_{k,\lambda k/n,q}[K]Z_{n-k, \lambda(1-k/n),q}[K].$$
Applying Theorem \ref{Theorem Weighted rate function for connectedness}, we obtain
\begin{equation}\label{Equation 6 Lemma Uniqueness prerequisite}
\frac{\phi_{n,\lambda,q}[K_{\epsilon,2}\setminus K]}{\phi_{n,\lambda,q}[K]}=e^{o(n)}\sum_{\epsilon n\leq k \leq n-\epsilon n}{n\choose k}\frac{\pi_1(\lambda\tfrac{k}{n})^k\pi_1(\lambda(1-\tfrac{k}{n}))^{n-k}}{\pi_1(\lambda)^n}\bigg(1-\frac{\lambda}{n}\bigg)^{k(n-k)},
\end{equation}
where $\pi_1(\lambda)=1-e^{-\lambda}$, and the $e^{o(n)}$ term is the error term found in Theorem \ref{Theorem Weighted rate function for connectedness}. The lemma now concludes identically to  (\cite{BCS2007}, Lemma $6.1$), rewriting (\ref{Equation 6 Lemma Uniqueness prerequisite}) as
\begin{equation}\label{Equation 8 Lemma Uniqueness prerequisite}
\frac{\phi_{n,\lambda,q}[K_{\epsilon,2}\setminus K]}{\phi_{n,\lambda,q}[K]}=e^{o(n)}\sum_{\epsilon n\leq k \leq n-\epsilon n}e^{n[\Xi(k/n)-\Xi(0)]},
\end{equation}
where the function $\Xi$ is defined by
\begin{equation}\label{Equation 9 Lemma Uniqueness prerequisite}
\Xi(\theta)= -S(\theta)+\theta\log\pi_1(\lambda\theta)+(1-\theta)\log\pi_1(\lambda(1-\theta)) -\lambda\theta(1-\theta).
\end{equation}
We now bound the sum in (\ref{Equation 8 Lemma Uniqueness prerequisite}) by $n$ times its maximal summand. As $\Xi$ is convex and symmetric around the point $1/2$, the summand is maximised at the endpoints. In particular, we have the bound
\begin{equation}
\frac{\phi_{n,\lambda,q}[K_{\epsilon,2}\setminus K]}{\phi_{n,\lambda,q}[K]}\leq e^{o(n)}e^{n[\Xi(\epsilon)-\Xi(0)]}.
\end{equation}
More explicitly, we may take any value $c_1 <\Xi(0) -\Xi(\epsilon)$ provided that $n$ is sufficiently large.
\end{proof}

\begin{proof}[Proof of Lemma \ref{Lemma Uniqueness of large components}]
The proof is identical to that of (\cite{BCS2007}, Lemma $6.2$). In particular, it will suffice to prove that
\begin{equation}\label{Equation 1 Proof Uniqueness of large components}
\phi_{n,\lambda,q}[\lvert\mathcal{V}_{\epsilon n}\rvert=\lfloor \theta n \rfloor,\ \mathcal{N}_{\epsilon n}>1]\leq e^{-cn}\phi_{n,\lambda,q}[\lvert\mathcal{V}_{\epsilon n}\rvert=\lfloor \theta n \rfloor].
\end{equation}
For a given vertex $x$, let $\mathcal{C}_x$ denote the component of $K_n(\omega)$ containing $x$. On the event $\{\lvert\mathcal{V}_{\epsilon n}\rvert=\lfloor \theta n \rfloor,\mathcal{N}_{\epsilon n}>1\}$, we may find a pair of vertices $x,y\in[n]$ in $K_n(\omega)$ such that $\lvert\mathcal{C}_x\rvert\geq \epsilon n$, $\lvert\mathcal{C}_y\rvert\geq \epsilon n$ and $x\nleftrightarrow y$. Define the following two events for the random graph:
\begin{align*}
A_1&=\{\lvert\mathcal{C}_x\rvert\geq \epsilon n\}\cap\{\lvert\mathcal{C}_y\rvert\geq \epsilon n\}\cap\{x\nleftrightarrow y\},\\
A_2&=\{\lvert\mathcal{C}_x\rvert\geq \epsilon n\}\cap\{\lvert\mathcal{C}_y\rvert\geq \epsilon n\}.
\end{align*}
As the complete graph is transitive, the probabilities of the events $A_1$ and $A_2$ do not depend on the particular choices of $x$ and $y$. By the union bound, it follows that
\begin{equation}\label{Equation 2 Proof Uniqueness of large components}
\phi_{n,\lambda,q}[\lvert\mathcal{V}_{\epsilon n}\rvert=\lfloor \theta n \rfloor,\mathcal{N}_{\epsilon n}>1]\leq n^2\phi_{n,\lambda,q}[\{\lvert\mathcal{V}_{\epsilon n}\rvert=\lfloor \theta n \rfloor\} \cap A_1].
\end{equation}
We now condition further on the set $\mathcal{C}_x\cup\mathcal{C}_y$. For a given set $\mathcal{C}\subset [n]$, let $D$ be the event that $\mathcal{C}$ is disjoint from $\mathcal{C}^c$ in $K_n(\omega)$ and that its complement contains $\lfloor \theta n \rfloor -\lvert\mathcal{C}\rvert$ vertices in components of size at least $\epsilon n$. Then
\begin{align*}
\phi_{n,\lambda,q}[\{\lvert\mathcal{V}_{\epsilon n}\rvert=\lfloor \theta n \rfloor\} \cap A_1]&=\sum_{\mathcal{C}\subset[n]}\phi_{n,\lambda,q}[A_1\cap\{\mathcal{C}_x\cup\mathcal{C}_y=\mathcal{C}\}\cap D],\\
&=\sum_{\mathcal{C}\subset[n]}\phi_{n,\lambda,q}[A_1\cap\{\mathcal{C}_x\cup\mathcal{C}_y=\mathcal{C}\}\mid D]\phi_{n,\lambda,q}[D].
\end{align*}
Write $m=\lvert\mathcal{C}\rvert$, $\tilde{\lambda}=\lambda\theta$, and $\tilde{\epsilon}=\epsilon/\theta$. On the event $D$, the random cluster measure restricts to the random cluster measure $\phi_{\theta n,\tilde{\lambda},q}$ on $\mathcal{C}$. Moreover, we have the following correspondences between events:
\begin{align*}
A_1\cap \{\mathcal{C}_x\cup\mathcal{C}_y=\mathcal{C}\}&=K^c\cap K_{\tilde{\epsilon},2},\\
A_2\cap \{\mathcal{C}_x\cup\mathcal{C}_y=\mathcal{C}\}&=K_{\tilde{\epsilon},2}.
\end{align*}
As $\tilde{\epsilon}\geq \epsilon$ for every $\theta > 0$, we may apply Lemma \ref{Lemma Uniqueness prerequisite} to deduce that
\begin{equation}\label{Equation 3 Proof Uniqueness of large components}
\phi_{n,\lambda,q}[A_1\cap\{\mathcal{C}_x\cup\mathcal{C}_y=\mathcal{C}\}\mid D]\leq e^{-c_1m}\phi_{n,\lambda,q}[A_2\cap\{\mathcal{C}_x\cup\mathcal{C}_y=\mathcal{C}\}\mid D],
\end{equation}
which allows us to rewrite (\ref{Equation 2 Proof Uniqueness of large components}) as
\begin{equation}\label{Equation 4 Proof Uniqueness of large components}
\phi_{n,\lambda,q}[\lvert\mathcal{V}_{\epsilon n}\rvert=\lfloor \theta n \rfloor, \mathcal{N}_{\epsilon n}>1]\leq n^2e^{-c_1m}\phi_{n,\lambda,q}[\{\lvert\mathcal{V}_{\epsilon n}\rvert=\lfloor \theta n \rfloor\} \cap A_2].
\end{equation}
The result follows as $\{\lvert\mathcal{V}_{\epsilon n}\rvert=\lfloor \theta n \rfloor\} \cap A_2 \subset \{\lvert\mathcal{V}_{\epsilon n}\rvert=\lfloor \theta n \rfloor\}$ and $m\geq \epsilon n$.
\end{proof}

\begin{proof}[Proof of Theorem \ref{Theorem Weighted rate function for size of largest component}]
By Lemma \ref{Lemma Uniqueness of large components}, it is sufficient to prove that
\begin{equation}
\lim_{\epsilon\downarrow 0}\lim_{n\to\infty}\frac{1}{n}\log Z_{n,\lambda,q}[\lvert\mathcal{V}_{\epsilon n}\rvert=\lfloor \theta n \rfloor, \mathcal{N}_{\epsilon n}=1]=\Phi(\theta,\lambda,q).
\end{equation}
The case $\theta=1$ reduces to Theorem \ref{Theorem Weighted rate function for connectedness} and the case $\theta=0$ reduces to Theorems \ref{Theorem Weighted rate function for acyclic graphs} and \ref{Theorem Acyclic equals small components}. Let $\theta \in(0,1)$, $\epsilon\in(0,\theta)$ and assume that $\theta n$ is an integer. Given a configuration $\omega$, observe that $\omega\in\{\lvert\mathcal{V}_{\epsilon n}\rvert = \theta n, N_{\epsilon n}=1\}$ if and only if we may find a subset $A \subset K_n$ of vertices (where $A$ depends on $\omega$) of size $\theta n$ such that $A$ is a connected component of $K_n(\omega)$, $A^c$ contains no connected components of $K_n(\omega)$ of size exceeding $\epsilon n$, and $E(A,A^c)=0$. We count the possible configurations which satisfy these conditions. Note that there are ${n\choose\theta n}$ possible choices for $A$, and $A$ is disconnected from $A^c$ in $K_n(\omega)$ with probability
\begin{equation}
\phi_{n,\lambda,q}[E(A,A^c)=0]=\frac{Z_{\theta n,\lambda\theta,q}Z_{(1-\theta)n,\lambda(1-\theta),q}}{Z_{K_n,p,q}}(1-\lambda/n)^{\theta(1-\theta)n^2}.
\end{equation}
Conditionally on this event, $A$ is connected in $K_n(\omega)$ with probability
\begin{equation}
\phi_{\theta n,\lambda\theta,q}[K]=Z_{\theta n,\lambda\theta,q}[K]/Z_{\theta n,\lambda\theta,q},
\end{equation}
and $A^c$ does not contain any components of size exceeding $\epsilon n$ in $K_n(\omega)$ with probability
\begin{equation}
\phi_{(1-\theta)n,\lambda(1-\theta),q}[B_{\epsilon n}]=Z_{(1-\theta)n,\lambda(1-\theta),q}[B_{\epsilon n}]/Z_{(1-\theta)n,\lambda(1-\theta),q}.
\end{equation}
Thus, we have the equation
\begin{equation}
Z_{n,\lambda,q}[\lvert\mathcal{V}_{\epsilon n}\rvert= \theta n,N_{\epsilon n}=1]={n\choose\theta n}(1-\tfrac{\lambda}{n})^{\theta n(1-\theta)n}Z_{\theta n,\lambda\theta,q}[K]Z_{(1-\theta)n,\lambda(1-\theta),q}[B_{\epsilon n}].
\end{equation}
We now take logarithms, divide by $n$ and take the limit as $n\to\infty$. Note first that
\begin{equation}
\lim_{n\to\infty}\frac{1}{n}\log\bigg({n\choose\theta n}(1-\lambda/n)^{\theta n(1-\theta)n}\bigg) = -S(\theta)+(1-\theta)\log[1-\pi_1(\lambda\theta)].
\end{equation}
Applying Theorem \ref{Theorem Weighted rate function for connectedness}, we also have
\begin{equation}
\lim_{n\to\infty}\frac{1}{n}\log Z_{\theta n,\lambda\theta,q}[K]=\theta\log \pi_1(\lambda\theta).
\end{equation}
Finally, we apply Theorems \ref{Theorem Weighted rate function for acyclic graphs} and \ref{Theorem Acyclic equals small components} to obtain
\begin{equation}
\begin{aligned}
\lim_{\epsilon\downarrow 0}\lim_{n\to\infty}\frac{1}{n}\log Z_{(1-\theta)n,\lambda(1-\theta),q}[B_{\epsilon n}]=&(1-\theta)\times\\
&\bigg\{\Psi(\tfrac{\lambda(1-\theta)}{q})-(\tfrac{q-1}{2q})\lambda(1-\theta)+\log q\bigg\}.
\end{aligned}
\end{equation}
Summing these limits yields the result. Convergence is uniform as each of the individual limits converge uniformly.
\end{proof}

\section{Free energy}

In this section, we prove Theorem \ref{Theorem Free energy of random cluster model on complete graph}. To begin, we let $\epsilon>0$. Then, we may decompose the partition function as
\begin{equation}\label{Equation 1 Free energy of random cluster model on complete graph proof}
Z_{n,\lambda,q}=Z_{n,\lambda,q}[B_{\epsilon n}]+\sum_{k>\epsilon n} Z_{n,\lambda,q}[\lvert \mathcal{V}_{\epsilon n}\rvert = k].
\end{equation}
By Lemma \ref{Lemma Uniqueness of large components}, we may write
\begin{equation}\label{Equation 2 Free energy of random cluster model on complete graph proof}
Z_{n,\lambda,q}=Z_{n,\lambda,q}[B_{\epsilon n}]+(1-o(1))\sum_{k>\epsilon n} Z_{n,\lambda,q}[\lvert \mathcal{V}_{\epsilon n}\rvert = k,\ \mathcal{N}_{\epsilon n}=1].
\end{equation}
We aim to apply Theorem \ref{Theorem Weighted rate function for size of largest component} to each summand in (\ref{Equation 2 Free energy of random cluster model on complete graph proof}). Recall that
\begin{equation}\label{Equation 3 Free energy of random cluster model on complete graph proof}
Z_{n,\lambda,q}[\lvert\mathcal{V}_{\epsilon n}\rvert= k,N_{\epsilon n}=1]={n\choose k}(1-\tfrac{\lambda}{n})^{k(n-k)}Z_{k,\lambda k/n,q}[K]Z_{n-k,\lambda(1-k/n),q}[B_{\epsilon n}].
\end{equation}
In order to apply Theorem \ref{Theorem Weighted rate function for size of largest component} to all of the summands in (\ref{Equation 2 Free energy of random cluster model on complete graph proof}) simultaneously, we require that the term $\lambda(1-k/n)$ belongs to a compact subset of $(0,\infty)\setminus\{q\}$. It will suffice to prove that the terms for which $k/n$ is close to $1$ or $1-q/\lambda$ have negligible probability, which we do via the following two tail inequalities for sufficiently small $\epsilon$:
\begin{equation}\label{Equation 4 Free energy of random cluster model on complete graph proof}
\begin{aligned}
Z_{n,\lambda,q}[\lvert\mathcal{V}_{\epsilon n}\rvert \geq (1-\epsilon)n,\ \mathcal{N}_{\epsilon n}=1]&\leq o(1) Z_{n,\lambda,q},\\
Z_{n,\lambda,q}[\lvert\mathcal{V}_{\epsilon n}\rvert \leq (1-q/\lambda+\epsilon)n,\ \mathcal{N}_{\epsilon n}=1]&\leq o(1)Z_{n,\lambda,q}.
\end{aligned}
\end{equation}
Equivalently, we show that
\begin{equation}\label{Equation 5 Free energy of random cluster model on complete graph proof}
\begin{aligned}
\phi_{n,\lambda,q}[\lvert\mathcal{V}_{\epsilon n}\rvert \geq (1-\epsilon)n,\ \mathcal{N}_{\epsilon n}=1]&\leq o(1),\\
\phi_{n,\lambda,q}[\lvert\mathcal{V}_{\epsilon n}\rvert \leq (1-q/\lambda+\epsilon)n,\ \mathcal{N}_{\epsilon n}=1]&\leq o(1).
\end{aligned}
\end{equation}
Both inequalities in (\ref{Equation 5 Free energy of random cluster model on complete graph proof}) may be proven via direct comparisons with percolation, using the following law of large numbers for the size of the largest component of $K_n(\omega)$ under the percolation measure $\phi_{n,\lambda}$, cited from \cite{H2016}:

\begin{theorem}[\cite{H2016}, Theorem $4.8$]\label{Theorem LLN percolation giant component}
Fix $\lambda>1$ and let $p=\lambda/n$. Then, for every $\nu\in(\tfrac{1}{2},1)$ there exists $\delta=\delta(\lambda,\nu)>0$ such that
\begin{equation}
\phi_{n,\lambda}[\lvert \lvert \mathcal{C}_1 \rvert - \theta(\lambda,1)n\rvert \geq n^\nu] = O(n^{-\delta}).
\end{equation}
\end{theorem}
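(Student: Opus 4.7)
The plan is to prove this classical law of large numbers via the moment method on the number of vertices in large components, followed by a uniqueness argument. Let $\theta := \theta(\lambda, 1)$ denote the survival probability of a Poisson$(\lambda)$ Galton-Watson branching process, i.e., the unique positive solution of $\theta = 1 - e^{-\lambda \theta}$. For a threshold $r$ and a vertex $v$, let $\mathcal{C}(v)$ denote its component in $K_n(\omega)$ and set $N_r = \#\{v \in V_n : |\mathcal{C}(v)| \geq r\}$.

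First, I would analyze the single-vertex exploration process from $v$. At step $t$ of a breadth-first search, the number of newly active vertices is a binomial variable stochastically sandwiched between $\mathrm{Binomial}(n - O(t), \lambda/n)$ and $\mathrm{Binomial}(n, \lambda/n)$, both of which converge to $\mathrm{Poisson}(\lambda)$. Supercriticality ($\lambda > 1$) produces a dichotomy: either the exploration dies within $O(\log n)$ steps (with probability $1 - \theta$), or it survives to produce a giant cluster (with probability $\theta$). Sandwiching with sub- and super-critical branching processes and applying standard random walk large deviation bounds yields
\begin{equation*}
\phi_{n,\lambda}\big[|\mathcal{C}(v)| \geq C \log n\big] = \theta + O(n^{-c})
\end{equation*}
for some $c = c(\lambda) > 0$ and $C$ sufficiently large. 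Setting $r_n = C \log n$, linearity of expectation yields $E[N_{r_n}] = n\theta + O(n^{1-c})$.

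Second, I would estimate $\mathrm{Var}(N_{r_n})$ via a joint exploration of two vertices $u, v$. Summing the pair probabilities $\phi_{n,\lambda}[|\mathcal{C}(u)|, |\mathcal{C}(v)| \geq r_n]$ over $u,v$ and carefully bookkeeping the cases $u \leftrightarrow v$ and $u \nleftrightarrow v$ shows $\mathrm{Var}(N_{r_n}) = O(n)$. Chebyshev's inequality then gives
\begin{equation*}
\phi_{n,\lambda}\big[|N_{r_n} - n\theta| \geq n^\nu\big] \leq \frac{\mathrm{Var}(N_{r_n})}{n^{2\nu}} = O(n^{1 - 2\nu}),
\end{equation*}
which is of the required polynomial form with exponent $\delta = 2\nu - 1 > 0$ whenever $\nu > 1/2$.

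The final and most delicate step is to transfer this concentration from $N_{r_n}$ (the number of vertices in medium-or-large components) to $|\mathcal{C}_1|$ itself. For this, I would use a sprinkling argument: decompose $p = \lambda/n$ percolation as the union of two independent percolations at parameters $(\lambda - \eta)/n$ and $\eta/n$. After the first stage, a strengthened variant of the first-moment argument shows that almost all vertices counted by $N_{r_n}$ lie in clusters of size at least $\epsilon n$ for some $\epsilon > 0$. In the second stage, any two such clusters share $\Omega(n^2)$ potential bridging edges, so the probability they remain disjoint is at most $(1 - \eta/n)^{\epsilon^2 n^2} = e^{-\Omega(n)}$. A union bound over pairs yields a unique giant component with probability $1 - e^{-\Omega(n)}$, giving $|\mathcal{C}_1| = N_{r_n} + o(n^\nu)$ and completing the proof. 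The main obstacle is the sharp variance bound $\mathrm{Var}(N_{r_n}) = O(n)$: the naive joint-exploration estimate only yields $O(n^{2 - c})$, which would force $\nu$ close to $1$, and refining it to linear order requires careful control of the correlation between the two single-vertex exploration events depending on whether they merge.
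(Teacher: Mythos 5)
The paper does not prove this statement; it is imported directly as Theorem $4.8$ of \cite{H2016} and used as an external input, so there is no internal proof to compare against. Your sketch, however, reproduces essentially the argument given in that reference: a first-moment estimate for the number $N_{r_n}$ of vertices in clusters of size at least $r_n = K\log n$, obtained by sandwiching the exploration between branching processes; a variance bound combined with Chebyshev; and a uniqueness step passing from $N_{r_n}$ to $\lvert\mathcal{C}_1\rvert$. So you have identified the correct route.

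One quantitative point, since you single out the variance bound as the main obstacle: you do not need, nor does the literature deliver, $\mathrm{Var}(N_{r_n}) = O(n)$. Decomposing $\mathbb{E}[N_{r_n}^2]$ according to $u\leftrightarrow v$ versus $u\nleftrightarrow v$, the merged pairs contribute on the order of $n\,\mathbb{E}\big[\lvert\mathcal{C}(v)\rvert\mathbbm{1}\{r_n\leq \lvert\mathcal{C}(v)\rvert\leq \alpha n\}\big]$, which, via the exponential tail coming from the branching-process dichotomy, is $O(r_n n) = O(n\log n)$; the non-merged pairs contribute $O(n)$ after conditioning on $\mathcal{C}(u)$ and comparing to percolation on the remaining $n - \lvert\mathcal{C}(u)\rvert$ vertices. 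The resulting bound $\mathrm{Var}(N_{r_n}) = O(n\log n)$ still yields $\phi_{n,\lambda}[\lvert N_{r_n} - n\theta\rvert\geq n^\nu] = O(n^{1-2\nu}\log n) = O(n^{-\delta})$ for any $\delta < 2\nu -1$, so nothing else in your argument needs to change. The sprinkling substitute you propose for the uniqueness step is also legitimate, but, as you acknowledge, it must be paired with the same exponential tail estimate to rule out clusters of size between $r_n$ and $\alpha n$, so that $N_{r_n}$ and $\lvert\mathcal{C}_1\rvert$ agree up to $o(n^\nu)$; with that step made explicit, the outline is sound.
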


We begin with the first inequality of (\ref{Equation 5 Free energy of random cluster model on complete graph proof}). Note that the random cluster measure $\phi_{n,\lambda,q}$ is stochastically dominated by the percolation measure $\phi_{n,\lambda}$ for $q\geq 1$ (see e.g. (\cite{G2006}, Theorem $3.21$)), yielding the upper bound
\begin{equation}\label{Equation 6 Free energy of random cluster model on complete graph proof}
\phi_{n,\lambda,q}[\lvert\mathcal{V}_{\epsilon n}\rvert \geq (1-\epsilon)n,\ \mathcal{N}_{\epsilon n}=1]\leq\phi_{n,\lambda}[\lvert\mathcal{V}_{\epsilon n}\rvert \geq (1-\epsilon)n,\ \mathcal{N}_{\epsilon n}=1].
\end{equation}
As the percolation measure is stochastically ordered in the edge weight $p$, we may assume that $\lambda >1$, in which case we are done by Theorem \ref{Theorem LLN percolation giant component} provided we take $\epsilon < 1-\theta(\lambda,1)$.

We now turn to the second inequality of (\ref{Equation 5 Free energy of random cluster model on complete graph proof}), noting first that it is only relevant if $\lambda>q$. If this is the case, then the random cluster measure $\phi_{n,\lambda,q}$ stochastically dominates the supercritical percolation measure $\phi_{n,\lambda/q}$, again by (\cite{G2006}, Theorem $3.21$). As a result, Theorem \ref{Theorem LLN percolation giant component} may be applied to show that
\begin{equation}\label{Equation Largest component LLN}
\phi_{n,\lambda,q}[\lvert\mathcal{V}_{\epsilon n}\rvert \leq (\theta(\lambda/q,1)-\epsilon)n,\ \mathcal{N}_{\epsilon n}=1]=O(n^{-\delta}).
\end{equation}
Write $\alpha=\lambda/q$, as before. We claim that $\theta(\alpha,1)>1-\alpha^{-1}$. As $\theta(\alpha,1)$ solves the equation
\begin{equation}
\alpha=-\frac{1}{\theta(\alpha,1)}\log(1-\theta(\alpha,1))
\end{equation}
it will be sufficient to prove that
\begin{equation}
-\frac{1}{\theta(\alpha,1)}\log(1-\theta(\alpha,1)) < (1-\theta(\alpha,1))^{-1},
\end{equation}
which is a consequence of the inequality $(1-x)\log(1-x)+x>0$ for $x \in(0,1)$. In particular, we are done provided we take $\epsilon<\theta(\lambda/q,1)-1+q/\lambda$.

Applying both inequalities of (\ref{Equation 4 Free energy of random cluster model on complete graph proof}) to (\ref{Equation 2 Free energy of random cluster model on complete graph proof}), we see that
\begin{equation}\label{Free energy sum}
(1-o(1))Z_{n,\lambda,q}=Z_{n,\lambda,q}[B_{\epsilon n}]+\sum_{k >\max\{\epsilon, \theta(\lambda/q,1)-\epsilon\}}^{(1-\epsilon)n}Z_{n,\lambda,q}[\lvert \mathcal{V}_{\epsilon n}\rvert = k,\ \mathcal{N}_{\epsilon n}=1].
\end{equation}
We are finally in a position to prove Theorem \ref{Theorem Free energy of random cluster model on complete graph}:

\begin{proof}[Proof of Theorem \ref{Theorem Free energy of random cluster model on complete graph}]
Let $s_n$ be the maximal summand of Equation (\ref{Free energy sum}). As the summands converge uniformly, so too does the maximal summand, with limit
\begin{equation}
\lim_{\epsilon\downarrow 0}\lim_{n\to\infty}\frac{1}{n}\log s_n = \sup_{\theta > \theta(\lambda/q,1)}\Phi(\theta,\lambda,q).
\end{equation}
Moreover, as the sum is bounded between $s_n$ and $ns_n$, we have
\begin{equation}
\frac{1}{n}\log s_n\leq\frac{1}{n}\log((1-o(1))Z_{n,\lambda,q})\leq \frac{1}{n}\log s_n+\frac{1}{n}\log n.
\end{equation}
Taking the limits as $n\to\infty$ and $\epsilon\downarrow 0$ in that order yields the result.
\end{proof}

We have proven that the free energy of the random cluster model converges to the supremum of the function $\Phi(\theta,\lambda,q)$. Finally, we evaluate this supremum. In particular, the following lemma shows that our computation of the free energy agrees with the one found in (\cite{BGJ1996}, Theorem $2.6$):

\begin{lemma}\label{Theorem Restatement of free energy}
Let $q>0$ and $\lambda>0$. Then
\begin{equation}\label{Equation Restatement of free energy}
\sup_{\theta\in[0,1]}\Phi(\theta,\lambda,q)=\sup_{\theta > \theta(\lambda/q,1)}\Phi(\theta,\lambda,q)=\frac{g(\theta(\lambda,q))}{2q}-\bigg(\frac{q-1}{2q}\bigg)\lambda+\log q
\end{equation}
where the function $g:(0,1)\to\mathbb{R}$ is defined by
\begin{equation}\label{Equation Free energy functional}
g(\theta)=-(q-1)(2-\theta)\log(1-\theta)-[2+(q-1)\theta]\log[1+(q-1)\theta].
\end{equation}
\end{lemma}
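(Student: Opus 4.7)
The plan is to prove the lemma by introducing a stripped version of $\Phi$, identifying its maximiser via the mean field equation, and performing a direct algebraic evaluation. Define $\Phi_0(\theta,\lambda,q)$ to be the expression obtained from $\Phi$ by replacing $\Psi$ with $0$. Since $\Psi\leq 0$, we have $\Phi\leq\Phi_0$ pointwise on $[0,1]$, with equality at every $\theta$ for which $\lambda(1-\theta)/q\leq 1$. Differentiating $\Phi_0$ in $\theta$ on $(0,1)$ and simplifying using the identity $1-e^{-\lambda\theta}=q\theta/(1+(q-1)\theta)$, I show that the interior critical points of $\Phi_0$ coincide with the positive solutions of (\ref{Equation Mean field}). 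Among these critical points together with the endpoints $0$ and $1$, the classification in (\cite{BGJ1996}, Lemma~2.5) identifies the global maximiser as $\theta(\lambda,q)$.

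Next, I verify that the cutoff in $\Psi$ does not activate at this maximiser, so that $\Phi_0(\theta(\lambda,q))=\Phi(\theta(\lambda,q))$. In the subcritical regime $\theta(\lambda,q)=0$ and the condition $\lambda/q\leq 1$ reduces to $\lambda\leq\lambda_c(q)\leq q$, which holds by definition of the critical threshold. In the supercritical regime, I compare the two curves $f_{\mathrm{RCM}}(\theta):=\log[1+(q-1)\theta]-\log(1-\theta)$ and $f_{\mathrm{perc}}(\theta):=-q\log(1-\theta)$. The elementary inequality $(1+(q-1)\theta)(1-\theta)^{q-1}\leq 1$ on $(0,1)$ gives $f_{\mathrm{RCM}}\leq f_{\mathrm{perc}}$, and comparing intersections with the line $\lambda\theta$ yields $\theta(\lambda,q)\geq\theta(\lambda/q,1)>1-q/\lambda$, where the last inequality was already established in Section~5. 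Hence $\lambda(1-\theta(\lambda,q))/q<1$, and the chain
\[
\sup_{\theta\in[0,1]}\Phi\leq\sup_{\theta\in[0,1]}\Phi_0=\Phi_0(\theta(\lambda,q))=\Phi(\theta(\lambda,q))\leq\sup_{\theta\in[0,1]}\Phi
\]
forces equality throughout, so $\sup_{[0,1]}\Phi=\Phi(\theta(\lambda,q))$. Combining $\theta(\lambda,q)\geq\theta(\lambda/q,1)$ with continuity of $\Phi$ yields the companion identity $\sup_{\theta>\theta(\lambda/q,1)}\Phi=\Phi(\theta(\lambda,q))$, proving the first equality of the lemma.

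Finally, I evaluate $\Phi(\theta^*)$ explicitly at $\theta^*=\theta(\lambda,q)$ by repeated substitution of the mean field equation. The identity $\log(1-e^{-\lambda\theta^*})=\log q+\log\theta^*-\log[1+(q-1)\theta^*]$ cancels the entropy term $-\theta^*\log\theta^*$, and $\log[1+(q-1)\theta^*]=\lambda\theta^*+\log(1-\theta^*)$ collapses the remaining $\log[1+(q-1)\theta^*]$ contribution, reducing $\Phi(\theta^*)$ to $-\log(1-\theta^*)-\lambda\theta^*+\log q-\tfrac{q-1}{2q}\lambda(1-\theta^*)^2$. The same substitution applied to $g$ yields $g(\theta^*)=-2q\log(1-\theta^*)-\lambda\theta^*[2+(q-1)\theta^*]$, and dividing by $2q$, subtracting $(q-1)\lambda/(2q)$, and adding $\log q$ reproduces $\Phi(\theta^*)$ once one matches the polynomial coefficients of $\theta^*$ and $(\theta^*)^2$. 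The main obstacle is identifying the correct critical point as the global maximiser when the mean field equation has multiple positive solutions (the case $q>2$), for which the detailed analysis of (\cite{BGJ1996}, Lemma~2.5) is essential; the remaining algebra is mechanical.
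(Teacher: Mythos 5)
Your $\Phi_0$-upper-bound strategy is genuinely different from the paper's, which differentiates $\Phi$ itself and treats the two regions $\lambda(1-\theta)\gtrless q$ separately, but there is a real gap in the step where you claim the interior critical points of $\Phi_0$ coincide with the positive solutions of the mean field equation. Writing $a=\tfrac{1-\theta}{q\theta}$, $b=\tfrac{e^{-\lambda\theta}}{1-e^{-\lambda\theta}}$, $k=\lambda\theta$, one computes $\partial_\theta\Phi_0=(\log a-ka)-(\log b-kb)$ on all of $(0,1)$. The map $x\mapsto\log x-kx$ is concave with maximum at $1/k$, and one always has $b\le 1/k$; but $a\le 1/k$ is equivalent to $\lambda(1-\theta)\le q$. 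So on the regime $\theta<1-q/\lambda$ (nonempty precisely when $\lambda>q$), $a>1/k$ while $b<1/k$, and $f(a)=f(b)$ no longer forces $a=b$. Indeed $\partial_\theta\Phi_0\to\log(\lambda/q)-\lambda/q+1<0$ as $\theta\downarrow 0$, whereas $\partial_\theta\Phi_0>0$ at $\theta=1-q/\lambda$, so $\Phi_0$ necessarily has an interior critical point in $(0,1-q/\lambda)$ that is not a mean field solution (recall positive mean field solutions satisfy $\theta>1-q/\lambda$). Your argument leaves unaddressed why this spurious critical point (and a priori any others in that region) is not the global maximiser of $\Phi_0$.

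The paper sidesteps this precisely by keeping the $(1-\theta)\Psi(\lambda(1-\theta)/q)$ term: in the regime $\lambda(1-\theta)>q$ its $\theta$-derivative is $-\log u+u-1$ with $u=\lambda(1-\theta)/q$, which combines with the $\Phi_0$-derivative to give $\partial_\theta\Phi=kb-\log(kb)-1\ge 0$, showing $\Phi$ is monotone increasing there and no spurious critical points need to be ruled out. To repair your proof you would need to show that the extra critical points of $\Phi_0$ in $(0,1-q/\lambda)$ are local minima (the sign change $-\to+$ does suggest this, but you must rule out further oscillation), so that $\sup_{[0,1-q/\lambda]}\Phi_0=\max\{\Phi_0(0),\Phi_0(1-q/\lambda)\}$, and then compare these with $\Phi_0(\theta(\lambda,q))$ (e.g.\ using $\Phi_0(\theta(\lambda,q))-\Phi_0(0)=g(\theta(\lambda,q))/(2q)\ge 0$). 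A second, smaller issue: BGJ1996 Lemma~2.5 only classifies the mean field solutions; deciding which of them maximises $\Phi$ (or $\Phi_0$) still requires the convexity analysis of $g$ from BGJ1996 Theorem~2.6, which the paper carries out explicitly. Finally, your inequality $(1+(q-1)\theta)(1-\theta)^{q-1}\le 1$ used to show $\theta(\lambda,q)\ge\theta(\lambda/q,1)$ reverses for $q<1$, whereas the paper's route (any mean field solution automatically has $ka=kb\le 1$, directly from $1+\lambda\theta\le e^{\lambda\theta}$) works for all $q>0$. Your closing algebraic evaluation of $\Phi(\theta^*)$ and $g(\theta^*)$ is correct.
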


\begin{proof}
We separate the argument into the cases $\lambda(1-\theta)>q$ and $\lambda(1-\theta)<q$, corresponding to the regions in which the $\Psi$ function is defined. We use the shorthand notation
\begin{equation}\label{Equation 1 Restatement of free energy}
a=\frac{1-\theta}{q\theta},\ b = \frac{e^{-\lambda\theta}}{1-e^{-\lambda\theta}},\ k = \lambda\theta.
\end{equation}
When $\lambda(1-\theta)>q$, the derivative of $\Phi$ with respect to $\theta$ is given by
\begin{equation}\label{Equation 2 Restatement of free energy}
\frac{\partial}{\partial\theta}\Phi(\theta,\lambda,q)=(kb)-\log(kb)-1.
\end{equation}
As $x-\log x-1 \geq 0$ with equality if and only if $x=1$, the derivative in (\ref{Equation 2 Restatement of free energy}) is equal to zero only if $kb=1$. This is equivalent to the equation $1+\lambda\theta=e^{\lambda\theta}$, for which the only solution is $\theta=0$. When $\lambda(1-\theta)<q$, we obtain the derivative
\begin{equation}\label{Equation 3 Restatement of free energy}
\frac{\partial}{\partial\theta}\Phi(\theta,\lambda,q) = (\log a - ka) - (\log b - kb).
\end{equation}
The function $\log x - kx$ is convex, with a maximum at $x=\tfrac{1}{k}$. We know that $a\leq\tfrac{1}{k}$ by assumption, and $b\leq\tfrac{1}{k}$ is a consequence of the inequality $1+\lambda\theta \leq e^{\lambda\theta}$. As a result, the derivative in (\ref{Equation 3 Restatement of free energy}) is equal to zero only if $a=b$, which may be rearranged to see that the maximising value $\theta^*$ satisfies the mean field equation (\ref{Equation Mean field}). Conversely, any solution $\theta$ to the mean-field equation satisfies the assumption (and so is a stationary point), as
\begin{equation}\label{Equation 4 Restatement of free energy}
\frac{\lambda(1-\theta)}{q}= ka = kb \leq 1.
\end{equation}
We may now assume $\theta^*$ satisfies the mean-field equation. Under this assumption, one may rewrite $\Phi(\theta^*, \lambda, q)$ in the form
\begin{equation}\label{Equation 5 Restatement of free energy}
\Phi(\theta^*, \lambda, q)=\frac{1}{2q}g(\theta^*)-\frac{q-1}{2q}\lambda+\log q.
\end{equation}
This is the form taken in (\cite{BGJ1996}, Theorem $2.6$). It remains to show that (\ref{Equation 5 Restatement of free energy}) is maximised when we take the solution $\theta(\lambda,q)$ to the mean field equation. We quote the following properties of the function $g$ from \cite{BGJ1996}:
\begin{equation}\label{Equation 6 Restatement of free energy}
\begin{aligned}
g(0)&=g'(0)=0,\\
g''(\theta)&=-\frac{q(q-1)[q-2-2(q-1)\theta]\theta}{(1-\theta)^2[1+(q-1)\theta]^2}.
\end{aligned}
\end{equation}
For $q\leq 2$, $g(\theta)$ is a convex, increasing function and the result is clear. For $q>2$, $g(\theta)$ is initially decreasing. Moreover, $g''(\theta)$ has a zero at $\theta=\tfrac{q-2}{2(q-1)}$, and is increasing thereafter. In particular, $g(\theta)$ is convex for $\theta>\tfrac{q-2}{2(q-1)}$ and has only one zero in this region, which we may compute as $\theta_c=\tfrac{q-2}{q-1}$. Note that $\theta_c$ is the largest solution to the mean-field equation for $\lambda=\lambda_c$.

We claim that $\theta_{\text{max}}$ is increasing as a function of $\lambda$. If $\theta_{\text{max}}(\lambda)=0$ then this is obvious, so we may assume that $\theta_{\text{max}}(\lambda)>0$. Let $\epsilon>0$, and define the function
\begin{equation}
h(\theta):=e^{-(\lambda+\epsilon)\theta}-\frac{1-\theta}{1+(q-1)\theta}.
\end{equation}
Noting that $h(\theta_{\text{max}}(\lambda))<0$ and $h(1)>0$, it follows that $h$ has a zero in the interval $(\theta_{\text{max}}(\lambda),1)$ i.e. $\theta_{\text{max}}(\lambda+\epsilon)>\theta_{\text{max}}(\lambda)$. 

We may now conclude. If $\lambda < \lambda_c$, then $\theta_{\text{max}}(\lambda)<\theta_{\text{max}}(\lambda_c)$ and so $g(\theta_{\text{max}}(\lambda))<0$. In particular, $\theta^*=0$ maximises the free energy. Conversely, if $\lambda > \lambda_c$ then it follows that $g(\theta_{\text{max}}(\lambda))>0$. As $g(\theta)$ is convex for $\theta > \tfrac{1}{2}\theta_c$, it follows that $\theta_{\text{max}}$ is the solution maximising the function $g(\theta)$, and so $\theta^*=\theta_{\text{max}}$.
\end{proof}

\section*{Acknowledgements}

This research was supported by the EPSRC grant EP/N509796/1/1935605. The author would also like to thank Roman Kotecký for his advice and
encouragement and acknowledge the support by the GA\v{C}R grant 20-08468S during his visit to Prague in January 2020. 

%%%Bibliography
\bibliographystyle{plainnat}

\begin{thebibliography}{30}
%%%
\bibitem{AKS1982}
Miklós Ajtai, János Komlós, and Endre Szemerédi.
\textit{Largest random component of a k-cube}.
Combinatorica 2  (1982), 1-7
%%%
\bibitem{ABS2004}
Noga Alon, Itai Benjamini, and Alan Stacey.
\textit{Percolation on finite graphs and isoperimetric inequalities}.
Ann. Probab. 32 (3) (2004), 1727-1745
%%%
\bibitem{BCS2007}
Marek Biskup, Lincoln Chayes and Spencer A. Smith.
\textit{Large-Deviations/Thermodynamic approach to percolation on the complete graph}.
Random Structures and Algorithms 31(3) (2007)
%%%
\bibitem{BGJ1996}
Béla Bollobás, Geoffrey Grimmett, and Svante Janson.
\textit{The Random-Cluster model on the complete graph}.
Probab. Th. Rel. Fields 104 (1996), 283-317
%%%
\bibitem{ER1959}
Paul Erdős and Alfred Rényi.
\textit{On Random Graphs. I}.
Publicationes Mathematicae. 6 (1959), 290–297
%%%
\bibitem{ER1960}
Paul Erdős and Alfred Rényi.
\textit{On the evolution of random graphs}.
Publ. Math. Inst. Hungar. Acad. Sci. 5 (1960)
%%%
\bibitem{FK1972}
Cees Fortuin and Pieter Kasteleyn.
\textit{On the random-cluster model. I. Introduction and relation to other models}.
Physica 57 (1972), 536–564
%%%
\bibitem{G2006}
Geoffrey Grimmett.
\textit{The Random-Cluster Model}.
Springer (2006)
%%%
\bibitem{H2016}
Remco van der Hofstad.
\textit{Random Graphs and Complex Networks}.
Cambridge University Press (2016)
\end{thebibliography}

\end{document}